\theoremstyle{definition}
\newtheorem{defin}{Definition}[section]
\newtheorem{prop}[defin]{Proposition}
\newtheorem{theorem}[defin]{Theorem}
\newtheorem{lemma}[defin]{Lemma}
\newtheorem{cor}[defin]{Corollary}
\newtheorem*{thm}{Main Theorem}
\newtheorem*{remark}{Remark}
\theoremstyle{plain} \newtheorem{exa}[defin]{Example}
\def\cleardoublepage{\clearpage\if@twoside \ifodd\c@page\else
  \hbox{}
  \vspace*{\fill}
  \begin{center}
  \end{center}
  \vspace{\fill}
  \thispagestyle{empty}
  \newpage
  \if@twocolumn\hbox{}\newpage\fi\fi\fi}
\def\cleardoublepage{\clearpage\if@twoside \ifodd\c@page\else
  \hbox{}
  \vspace*{\fill}
  \begin{center}
  \end{center}
  \vspace{\fill}
  \thispagestyle{empty}
  \newpage
  \if@twocolumn\hbox{}\newpage\fi\fi\fi}
\newcommand{\rank}{\mathop{\operator@font rank}}
\numberwithin{equation}{section}
\renewcommand\section{\@startsection {section}{1}{\z@}%
                                   {-3.5ex \@plus -1ex \@minus -.2ex}%
                                   {2.3ex \@plus.2ex}%
                                   {\normalfont\large\bfseries}}
\renewcommand\subsection{\@startsection{subsection}{2}{\z@}%
                                     {-3.25ex\@plus -1ex \@minus -.2ex}%
                                     {1.5ex \@plus .2ex}%
                                     {\normalfont\bfseries}}
\begin{document}
\title{{Asymmetry of Outer Space of a free product}}	

\author{Dionysios Syrigos}
\date{\today}
\maketitle

\begin{abstract}
For every free product decomposition $G = G_{1} \ast ...\ast G_{q} \ast F_{r}$ of a group of finite Kurosh rank $G$, where $F_r$ is a finitely generated free group, we can associate some (relative) outer space $\mathcal{O}$. We study the asymmetry of the Lipschitz metric $d_R$ on the (relative) Outer space $\mathcal{O}$.
More specifically, we generalise the construction of Algom-Kfir and Bestvina, introducing an (asymmetric) Finsler norm $\|\cdot\|^{L}$ that induces $d_R$.
Let's denote by $Out(G, \mathcal{O})$ the outer automorphisms of $G$ that preserve the set of conjugacy classes of $G_i$'s. Then there is an $Out(G, \mathcal{O})$-invariant function $\Psi : \mathcal{O} \rightarrow \mathbb{R}$ such that when $\| \cdot \|^{L}$ is corrected by $d \Psi$, the resulting norm is quasisymmetric.
As an application, we prove that if we restrict $d_R$ to the $\epsilon$-thick
part of the relative Outer space for some $\epsilon >0$, is quasi-symmetric . Finally, we generalise for IWIP automorphisms of a free product a theorem of Handel and Mosher, which states that there is a uniform bound which depends only on the group, on the ratio of the relative expansion factors of any IWIP $\phi \in Out(F_n)$ and its inverse.
\end{abstract}

	\tableofcontents
\section{Introduction}
Outer Space is a very well studied space, which can be used to study the group of outer automorphisms $Out(F_n)$ of a finitely generated free group $F_n$. There are a lot of combinatorial and topological methods to study the space. However, Francaviglia and Martino in \cite{FM1} introduced a natural asymmetric Lipschitz metric $d_R$ on $CV_n$. We could define also a symmetric version of this metric, but the non-symmetric one is geodesic and seems natural in terms of studying the dynamics of free group automorphisms. Recently, this metric theory and the resulting
geometric point of view have been used extensively to study the Outer Space. As a consequence, we can get many new results, as well as more elegant new proofs of older results, for example see: \cite{Algom Kfir 2}, \cite{Algom Kfir}, \cite{Bestvina}, \cite{FM2} and \cite{Hamenstandt}.\\
On the other hand, Guirardel and Levitt in \cite{GF} constructed an outer space relative to any free product decomposition of a group $G = G_{1} \ast ...\ast G_{q} \ast F_{r} $ of finite Kurosh rank. There are a lot of analogies between the classical and the general Outer Space. Firstly, Francaviglia and Martino in \cite{FM}  introduced and studied the Lipschitz metric for the general case. In the same paper, they proved as an application, the existence of train track representatives for (relative) IWIP automorphisms. Moreover, many well known constructions and theorems of the free case can be generalised in the general case (for example, see \cite{CT}, \cite{HL}, \cite{H1}, \cite{H2}, \cite{H3}, \cite{Sykiotis}  and \cite{Syrigos}). This is a motivation to study further analogies, and in particular here we study the asymmetric metric $d_R$.\\
In this paper, we generalise the construction of Algom-Kfir and Bestvina in \cite{Algom Kfir} following closely their approach, as we introduce an asymmetric Finsler norm on the tangent space of the relative Outer space that induces the asymmetric Lipschitz metric. We also show how to correct this norm to make it quasi-symmetric. Our main result explains the lack of quasi-symmetry in terms of a certain function and more specifically:

\begin{thm}
There is an $Out(G, \mathcal{O})$-invariant continuous, piecewise smooth function $\Psi : \mathcal{O} \rightarrow \mathbb{R}$ and constants $A, B > 0$ (depending only on the numbers $r,q$)
such that for every $T,S \in \mathcal{O}$ we have $d(T, S) \leq A \cdot d(S, T) + B \cdot [\Psi(T) - \Psi(S)]$.
\end{thm}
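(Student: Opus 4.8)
The plan is to deduce the integrated inequality from a pointwise (infinitesimal) quasi-symmetry of the corrected Finsler norm, exactly as in the free case. Write $N(v) = \|v\|^{L} + d\Psi(v)$ for the norm obtained by correcting $\|\cdot\|^{L}$ with $d\Psi$; the content of the statement at the infinitesimal level is that $N$ is quasi-symmetric, i.e. $N(v) \leq A\, N(-v)$ for every tangent vector $v$, with $A$ depending only on $r,q$. Granting this, I would integrate along paths. Since $d_{R} = d$ is the length metric of $\|\cdot\|^{L}$, for any path $\gamma \colon [0,1] \to \mathcal{O}$ from $T$ to $S$ one has $\int_{0}^{1} d\Psi(\dot\gamma)\,dt = \Psi(S) - \Psi(T)$ by the fundamental theorem of calculus, so the $N$-length of $\gamma$ equals its $\|\cdot\|^{L}$-length plus the path-independent constant $\Psi(S) - \Psi(T)$. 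Minimising over $\gamma$ gives $d_{N}(T,S) = d(T,S) + \Psi(S) - \Psi(T)$, and feeding this into $d_{N}(T,S) \leq A\, d_{N}(S,T)$ yields $d(T,S) \leq A\, d(S,T) + (A+1)[\Psi(T) - \Psi(S)]$, which is the claim with $B = A+1$.

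The core is therefore the pointwise estimate, which I would first establish inside a single open simplex of $\mathcal{O}$, i.e. with the quotient graph $\Gamma$ of the $G$-tree fixed and only the edge lengths varying subject to fixed covolume. There $\|v\|^{L}$ is the maximum over candidates $\gamma$ of the logarithmic derivative $\sum_{e} n_{e}(\gamma)\,\dot{\ell}_{e} / \sum_{e} n_{e}(\gamma)\,\ell_{e}$, and $\|-v\|^{L}$ is $-$ the corresponding minimum. The covolume constraint forces the $\ell$-weighted average of the quantities $\dot{\ell}_{e}/\ell_{e}$ to vanish, which by a convex-geometry argument on the simplex bounds the forward value by the backward value up to a multiplicative error governed by $1/\ell_{\min}$ and by the number of edges and candidates; these combinatorial data are bounded purely in terms of $r$ and $q$, which is the source of the constants. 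I would then define $\Psi$ so that $d\Psi(v)$ contributes precisely a weighted sum of the $\dot{\ell}_{e}/\ell_{e}$ cancelling this $\log(1/\ell_{\min})$-type defect; concretely $\Psi$ is assembled from logarithms of lengths of a controlled intrinsic family of loops and edges, normalised so as to be independent of the marking.

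Having the estimate on each open simplex, I would globalise. Invariance under $Out(G,\mathcal{O})$ is automatic once $\Psi$ is expressed intrinsically through the length function of the tree rather than through a chosen marking, since this group acts by precomposition with markings. Continuity and piecewise smoothness on each simplex are clear; the delicate point is matching across faces, where an edge of $\Gamma$ degenerates and the combinatorial type of the graph changes. Here I would check that the intrinsic family defining $\Psi$ varies continuously through such collapses, so that the $-\infty$ behaviour of a naive $\sum_{e}\log \ell_{e}$ is avoided, and that the infinitesimal inequality on a face is the limit of the inequalities on the adjacent open simplices, so that a single $A$ works uniformly. Finally I would incorporate the free-product specifics: candidates in the relative setting also record how loops interact with the vertex stabilisers $G_{i}$, so I must verify that the convex-geometry bound and the count of candidates remain controlled by $r,q$ alone, independently of the factors $G_{i}$.

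The step I expect to be the main obstacle is the construction of $\Psi$: producing a single function that is genuinely continuous and piecewise smooth across the whole of $\mathcal{O}$, including the degenerations where the underlying graph changes, while its differential uniformly compensates the asymmetry defect, is exactly where the free case required care, and the presence of nontrivial vertex groups $G_{i}$ means the candidate structure and the near-vertex geometry must be re-examined to keep all constants depending only on $r$ and $q$.
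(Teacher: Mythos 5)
Your outer scaffolding coincides exactly with the paper's: correct $\|\cdot\|^{L}$ by $d\Psi$, prove an infinitesimal quasi-symmetry $\|\tau\|^{N} \leq A\,\|-\tau\|^{N}$, integrate to get $len_{N}(\gamma) = len_{L}(\gamma) + \Psi(S)-\Psi(T)$ and $len_{N}(-\gamma) \leq A\, len_{N}(\gamma)$, then evaluate on a geodesic to obtain $d(T,S) \leq A\, d(S,T) + (A+1)[\Psi(T)-\Psi(S)]$; this integration step is correct and even produces the paper's constant $B = A+1$. The genuine gap is that you never construct $\Psi$ and never prove the pointwise inequality: you explicitly defer both, and they are the entire content of the theorem. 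Moreover, the route you sketch for the pointwise bound cannot be completed as stated. A convex-geometry estimate on a single simplex only gives quasi-symmetry of $\|\cdot\|^{L}$ with a multiplicative error of order $1/\ell_{\min}$ (this is essentially the thick-part statement, and the paper's examples show the error is unavoidable); to convert that defect into something integrable you need the correction $d\Psi$ to cancel \emph{exactly} against the candidate realizing $\|-\tau\|^{L}$, and a correction ``assembled from logarithms of lengths of loops and edges'' does not do this: logarithms of edge lengths blow up at faces of $\mathcal{O}$, and an arbitrary intrinsic family of loops gives no cancellation against the maximizing candidate, which varies with $\tau$.

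Concretely, the machinery your proposal is missing is the following, and it is where all the work in the paper lies: (1) a finite, marking-independent index set $H(T)$ of homothety classes built from the relative abelianisation $G_1 \oplus \cdots \oplus G_r \oplus H_1(\Gamma;\mathbb{Z}_2)$ --- ordinary $\mathbb{Z}_2$-homology is insufficient precisely because of the vertex groups $G_i$, which is one of the new phenomena of the relative case; (2) the proof (Proposition \ref{realise}) that the length $\ell([x])$ of each class is realized by finitely many projectively inequivalent hyperbolic elements, uniquely so when the realizer projects to an embedded loop or a degenerate barbell; (3) for $n \geq 2$, passage to the $2^{n}-1$ double covers of $T$ with full vertex stabilisers (Lemma \ref{lifts}), needed because figure-eight and barbell candidates do not uniquely minimize in their class in $T$ itself; (4) with $\Psi$ defined as $-\frac{1}{K+1}\sum \log \ell([\alpha])$ over these classes (and covers), the key sandwich $\frac{1}{K+1}\max\{\|\tau\|^{L},\|-\tau\|^{L}\} \leq \|\tau\|^{N} \leq 2\|\tau\|^{L} + \|-\tau\|^{L}$, whose essential half rests on the fact that the candidate realizing $\|-\tau\|^{L}$ occurs as a term of $-N(\ell,\tau)$ and cancels exactly; this yields $A = 3(K+1)$ with $K$ depending only on the ranks, and simultaneously gives continuity of $\Psi$ across faces, since class lengths are lengths of hyperbolic elements and hence remain positive under the collapses defining the face identifications. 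Without items (1)--(4), your argument establishes only the reduction of the theorem to the infinitesimal statement, not the theorem itself.
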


As an application, we prove that if we restrict the asymmetric metric $d_R$ to the $\epsilon$-thick
	part of the relative Outer space for $\epsilon >0$, which is the subspace of $\mathcal{O}$ of the points for which all hyperbolic elements have length bounded below by $\epsilon$, is quasi-symmetric (actually, we just need the multiplicative constant). Finally, we generalise a theorem of Handel and Mosher (see \cite{Handel + Mosher}), that there is a uniform bound, which depends only on the numbers $r$ and $q$, on the ratio of the relative expansion factors of any IWIP $\phi \in Out(G, \mathcal{O})$ and its inverse. Since any automorphism $\phi \in Out(G)$ is irreducible relative to some appropriate space $\mathcal{O}$, we can apply the general theorem to get a result for the expansion factors of any automorphism $\phi \in Out(F_n)$ and its inverse, as in the general theorem of $\cite{Handel + Mosher}$.

\textbf{Acknowledgements:} I wish to thank my advisor Armando Martino for his help, suggestions and corrections.
	
\section{Preliminaries}
\subsection{Kurosh rank and $\mathbb{R}$-trees}
Let's suppose that $G$ is a group which splits as a finite free product $G = H_{1} \ast ...\ast H_{r} \ast F_{n}$, where every $H_i$ is non-trivial, not isomorphic to $\mathbb{Z}$ and freely indecomposable. We say that such a group has \textit{finite Kurosh rank} and such a decomposition is called \textit{Gruskho decomposition}.  Note that the $G_i$'s are unique, up to conjugacy and the ranks $n,r$ are well defined. The number $r+n$ is called the Kurosh rank of $G$. Finally, every f.g. group admits a splitting as above (by the theorem of Grushko). We are interested only for groups which have finite Kurosh rank.\\
Now for a group $G$ of finite Kurosh rank, we fix an arbitary (non-trivial) free product decomposition $G = H_{1} \ast ...\ast H_{r} \ast F_{n}$, i.e without assuming that each $H_i$ is not isomorphic to $\mathbb{Z}$ or freely indecomposable. Note that these groups admit co-compact actions on $\mathbb{R}$-trees (and vice-versa).\\
More specifically, for a simplicial tree $T$ (not necessarily locally compact), we denote by
$V(T)$ and $E(T)$ the set of vertices and edges of $T$, respectively. We put also a metric on the tree $T$, by assigning a positive length to each edge and we can think $T$ as a $\mathbb{R}$-tree.  
Now, for $x, y \in T$, we denote by $[x, y]$ the unique path from $x$ to $y$, and for any reduced path $p$ in $T$ we denote by $\ell_T (p)$ the length of $p$ in $T$ which is defined by summing the lengths of the edges that $p$ crosses.\\
We consider only isometric actions of the group $G$ on $\mathbb{R}$-trees and, more specifically, we say that $T$ is a \textit{$G$-tree,} if it is a simplicial metric tree $ (T, d_T )$, where $G$ acts simplicially on $T$ (sending vertices to vertices and edges to edges) and for all $g \in G, e \in E(T) $ we have that $e$ and $ge$ are isometric. Moreover, we suppose that every $G$-action is \textit{minimal}, which means that there is no $G$-invariant proper subtree.\\
Now let's fix a $G$-tree $T$. An element $g \in G$ is called \textit{hyperbolic}, if it doesn't fix any points of $T$. Any hyperbolic element $g$ of $G$ acts by translation on a subtree of $T$ homeomorphic to the real line, which is called the axis of $g$ and denoted by $axis_T (g)$. The \textit{translation
length} of $g$ is the distance that $g$ translates its axis. The action of $G$ on $T$
defines a length function denoted by
\begin{equation*}
\ell_T : G \rightarrow R, \ell_T (g) : = \inf \limits_{x \in T} d_T (x, gx).
\end{equation*}
In this context, the infimum is always minimum and we say that $g \in G$ is hyperbolic if and only if $\ell_T (g) > 0$. Otherwise, $g$ is called \textit{elliptic} and it fixes a point of $T$.
Finally, if $g$ is hyperbolic, we can find some $v \in axis_T(g)$ s.t. the unique reduced path from $v$ to $gv$ has length exactly $\ell_T (g)$. Sometimes, the segment $[v, gv]$ (or even the loop $\alpha$ on which $[v,gv]$ projects to  $\Gamma = G/T$) is called the \textit{ period of the axis}.
For more details about $\mathbb{R}$-trees, see \cite{Culler Morgan}.
 \subsection{Outer Space and The simplex of Metrics}
Let's fix an arbitrarily free product decomposition $G = G_{1} \ast ...\ast G_{r} \ast F_{n}$ of a group $G$ of finite Kurosh rank.
Note that it is useful that we can also apply the theory in the case that $G$ is free, and the $G_i$'s are certain free factors of $G$ (relative free case).\\
Following \cite{GF}, we define an outer space $\mathcal{O} = \mathcal{O}(G, (G_i)^{r}_{i=1}, F_n)$ relative to this free product decomposition (relative outer space).
\begin{defin}\label{Outer Space}
 An element $T$ of the outer space $\mathcal{O}$ can be thought as simplicial metric $G$-tree, up to $G$-equivariant homothety.
Moreover, we require that:
\begin{itemize}
	\item The edge and the tripod stabilisers are trivial.
	\item There are finitely many orbits of vertices with non-trivial stabiliser and more precisely for every $G_i$, $i = 1,..., r$  there is exactly one vertex $v_i$ with stabiliser $G_i$ (all the vertices in the orbits of $v_i$'s are called \textit{non-free vertices}).
	\item  All other vertices have trivial stabiliser (and we call them \textit{free vertices}).
\end{itemize}
\end{defin}
 The mimimality implies that we have finitely many orbits of edges for every tree $T$ and we denote by $E_1(T)$ the finite set which contains exactly one edge of each orbit. Also, for convenience we normalise the length of edges and we suppose that the sum of the lengths of edges in $E_1(T)$ is $1$.\\
 Note that by a remark of \cite{FM}, the hyperbolic elements of $T \in \mathcal{O}$ depends only on the space $\mathcal{O}$ and we denote them by $Hyp(\mathcal{O})$.\\
On the other hand, for a $G$-tree $T$ as above, we can consider a lot of different metrics $\ell$ s.t. $(T, \ell) \in \mathcal{O}$. More specifically, we say that a $G$-invariant function $\ell : E(T) \rightarrow [0,1]$ is a \textit{metric} (relative to $\mathcal{O}$) on $T$, if there is no hyperbolic element $g \in G$ in $\mathcal{O}$ s.t. $\ell_T(g) = 0$.\\
We denote by $\Sigma _ {T}$ the set of all metrics in $T$.
The space $\Sigma_T$ of all metrics $\ell$ on $T$ is a "simplex with missing faces", where the
missing faces correspond to metrics that vanish on a $G$-subgraph that contains the axes of hyperbolic elements. Therefore in that case $(T, \ell)$ is not an element of $\mathcal{O}$.\\
Alternatively, we could define $\mathcal{O}$ as the disjoint union of the simplices $\Sigma_T$, where $T$ varies over all the $G$-trees $T$ which satisfy the assumptions of the Definition \ref{Outer Space}.\\
We would like to define a natural action of $Out(G)$ on $\mathcal{O}$, but this is not possible since it not always the case that the automorphisms of $G$ preserve the structure of the trees, as they may not preserve the conjugacy classes of the $G_i$'s. However, we can describe here the action of a specific subgroup of $Out(G)$ (namely, the automorphisms that preserve the decomposition or, equivalently, the structure of the trees) on $\mathcal{O}$.\\
Let $Aut(G, \mathcal{O})$ be the subgroup of $Aut(G)$  that preserve the set of conjugacy classes of the $G_i$ 's. Equivalently, $\phi \in Aut(G)$ belongs to $Aut(G,\mathcal{O})$ iff $\phi(G_i)$ is conjugate to one of the $G_j$ 's (in general, $i$ is different to $j$). The group $Aut(G,\mathcal{O})$ admits a natural action on a simplicial tree by "changing the action", i.e. for $\phi \in Aut(G, \mathcal{O})$ and $T \in \mathcal{O}$, we define $\phi(T)$ to be the element with the same underlying tree with $T$, the same metric but the action is given by $g*x = \phi(g)x$ (where the action in the right hand side is the action of the $G$-tree $T$). As $Inn(G)$ acts on $\mathcal{O}$ trivially, there is a induced action of $Out (G,\mathcal{O}) = Aut(G,\mathcal{O})/ Inn(G)$ on $\mathcal{O}$. Note also that in the case of the Grushko decomposition we have $Out(G) = Out(G,\mathcal{O})$.\\
\subsection{Tangent spaces}
For every $\ell \in \Sigma_{T} $, we define the tangent space 
\begin{equation*}
T_{\ell} (\Sigma_{T}) = \Big \{ \tau : E(T) \rightarrow \mathbb{R} | \sum \limits_{ e \in E(T) } \tau(e) = 0 \Big \}.
\end{equation*}
Since the tangent space does not depend on the metric, for every two metrics $\ell, \ell '$ the natural identification between $T_{\ell} (\Sigma_{T})$ and $T_{\ell '} (\Sigma_{T})$, implies that the total tangent space can be written as $T (\Sigma_{T}) \cong \Sigma_{T} \times \mathbb{R} ^{N-1}$ where $N$ is the number of edges  of $\Sigma_{T}$. \\
\begin{defin}
A tangent vector $\tau \in T_{\ell} (\Sigma_{T})$ is \textit{integrable} (relative to $\ell$), if $\tau(e) < 0$ implies that $\ell(e) > 0$ for all $e \in E(\Sigma_{T})$, i.e. it is not possible to find an edge $e$ with $\tau(e) < 0 $ and $\ell(e) = 0$.
\end{defin}
Note that if $\tau$ is integrable, then for all sufficiently small $t \geq 0$  we have that $\ell + t \tau \in \Sigma_{T}$.\\
As a consequence, we can define $\tau (p)$ for any reduced path $p$ in $T$, as $\sum\limits_{e} \tau(e)$ where $e$ varies all over the edges that $p$ crosses, counted with multiplicity.
Therefore if $g \in Hyp(\mathcal{O})$ and $L_g$ is the period of the axis of $g$, we can define $\tau(g) : = \tau(L_g)$. 

\subsection{Lipschitz metric and Optimal maps}
In this section, we follow \cite{FM}. Let $A, B \in \mathcal{O}$ be two elements of the outer space and let's denote by $\ell_A$, $\ell_B$ the corresponding translation functions of $A$ and $B$, respectively.
Here we define the (right) stretching factor as:
\begin{equation*}
\Lambda_R(A, B) :=  \sup \limits _ {g \in Hyp({\mathcal{O}})} \frac{\ell_B(g)}{\ell_A (g)}
\end{equation*}
and the (right) asymmetric pseudo-distance as:
\begin{equation*}
d_R(A,B) = d(A,B) : = \log \Lambda_R(A, B) 
\end{equation*}
In the case where $r=2$ and $n=0$, we have just one tree with exactly one orbit of edges. Therefore the metric vanishes. However, in any other case the metric is not symmetric and in fact is not even quasi-symmetric. If $n \geq 2$, we can adjust the counter- examples of the free case in order to work in the general case as well. We will give an examples for the case where $r=n=1$.
\begin{exa}
Suppose that $r=n=1$ and so is of the form $G = G_1 \ast \mathbb{Z} $, where $G_1$ is any group of finite Kurosh rank. Then we have two simplices (marked trees, if we forget the metric) of $G$-trees and let's denote them by $T,S$ s.t. $G/T$ is a loop with a non free vertex and $G/S$ a loop with one edge attached connecting the loop and the non-free vertex which has valence $1$. Now the unique representative of edges of $T$ has length 1 while we give length $\epsilon$ in the edge corresponding to the loop of $G / S$ and $1-\epsilon$ to the other and let's denote this metric tree by $S_\epsilon \in \mathcal{O}$.\\
Now all the hyperbolic elements in $T$ have length $1$, while in $S_\epsilon$ there are hyperbolic elements of length $\epsilon$ and some others with length $2-\epsilon$. Therefore choosing $\epsilon$ sufficiently small, we can see that $d_R(T,S_{\epsilon}) = 2-\epsilon$, while $d_R(S_{\epsilon},T) =  \frac{1}{\epsilon}  \rightarrow \infty$.
\end{exa}
Let's recall the definition of \cite{FM} and some useful properties. We say that a map $f : A \rightarrow B$, where $A,B \in \mathcal{O}$, is an $\mathcal{O}$- map, if it is a $G$-equivariant, Lipschitz continuous, surjective function. One interesting property is the following:

\begin{lemma}
	For every pair $A,B \in \mathcal{O}$; there exists an $\mathcal{O}$-map $f : A \rightarrow B$.
	Moreover, any two $\mathcal{O}$-maps from $A$ to $B$ coincide on the non-free vertices.
\end{lemma}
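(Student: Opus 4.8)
The two assertions are of quite different character, so the plan is to treat them separately, and in fact to prove the uniqueness statement first, since it is forced purely algebraically and will also pin down the behaviour of the map we later build.

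\emph{Uniqueness on non-free vertices.} Let $v$ be a non-free vertex of $A$, so that its stabiliser is one of the factors $G_i$ (a fixed conjugate). If $f\colon A\to B$ is any $\mathcal{O}$-map, then for every $g\in G_i$ equivariance gives $g\,f(v)=f(gv)=f(v)$, so $f(v)$ is fixed by all of $G_i$. The key point I would isolate is that $G_i$ fixes a \emph{unique} point of $B$: the fixed-point set of a group acting on a tree is a subtree, so if $G_i$ fixed two distinct points it would fix the arc between them pointwise and in particular fix an edge, contradicting the triviality of edge stabilisers (recall $G_i$ is non-trivial). Hence $f(v)$ is completely determined — it is the non-free vertex of $B$ carrying $G_i$ — and the same holds on the whole orbit $Gv$ by equivariance. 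Since this uses nothing about $f$ beyond equivariance, any two $\mathcal{O}$-maps agree on all non-free vertices.

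\emph{Existence.} I would construct $f$ skeleton by skeleton. On the non-free vertices there is no freedom: send each to the unique point of $B$ fixed by its stabiliser, which is consistent with equivariance by the computation above. On a set of orbit representatives of the free vertices — finitely many, since the quotient graph is finite — I would assign arbitrary images (say, vertices of $B$) and extend $G$-equivariantly; this is well defined because free vertices have trivial stabiliser, hence lie in free orbits. Finally, send each edge $e=[x,y]$ affinely onto the geodesic $[f(x),f(y)]$ of $B$, defining the map on edge representatives and extending equivariantly, which is consistent because edge stabilisers are trivial. The resulting map is continuous (edges sharing a vertex agree there) and $G$-equivariant by construction. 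Lipschitz-continuity is then immediate from finiteness: each of the finitely many edge orbits is sent to a geodesic of finite length, so $f$ is Lipschitz with constant $\max_e \ell_B([f(x_e),f(y_e)])/\ell_A(e)$, the maximum over the representatives.

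For surjectivity I would argue by minimality of the target, and this is the step I expect to require the most care. Since $A$ is connected and $f$ continuous, $f(A)$ is a connected subset of the real tree $B$; the point to exploit is that in a real tree every connected subset is convex, i.e. a subtree, because any point on the arc between two points of $f(A)$ is a cut point whose removal would disconnect $f(A)$. Thus $f(A)$ is a non-empty $G$-invariant subtree, and minimality of the $G$-action on $B$ forces $f(A)=B$. (Alternatively, for hyperbolic $g$ the image of $axis_A(g)$ is a connected $\langle g\rangle$-invariant set containing $axis_B(g)$, and these axes exhaust the minimal tree $B$.) The genuinely delicate point is precisely this: one must know the image is not merely connected but a bona fide $G$-invariant subtree before minimality can be invoked, and it is the convexity of connected subsets of a real tree that supplies this, after which minimality finishes the argument.
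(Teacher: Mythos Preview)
Your argument is correct and is essentially the standard one. Note, however, that the paper does not actually supply a proof of this lemma: it is stated as one of the ``useful properties'' recalled from Francaviglia--Martino \cite{FM}, so there is no proof in the paper to compare against. Your reasoning (equivariance forces $f(v)$ into $\mathrm{Fix}_B(\mathrm{Stab}_A v)$, which is a single point since edge stabilisers are trivial; then build $f$ equivariantly on vertex-orbit representatives, extend affinely on edges, and conclude surjectivity from minimality via the convexity of connected subsets of an $\mathbb{R}$-tree) is exactly how the result is established in that reference.
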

In addition, it can be proved that for every $A,B$ there is an $\mathcal{O}$-map $f$ which realises the distance between them, which means that the Lipschitz constant of $Lip(f)$ is exactly $d(A,B)$. These maps are called \textit{optimal}. In particular, for every IWIP automorphism $\phi \in Out(G, \mathcal{O})$ relative to $\mathcal{O}$, there is an optimal train track representative $f : T \rightarrow \phi(T)$ of $\phi$ that stretches every edge by a specific number, which is called the \textit{expansion factor} of $\phi$ (relative to $\mathcal{O}$).\\
Finally, we list some useful properties of the metric:
\begin{prop}(Francaviglia and Martino, \cite{FM})
	\begin{enumerate}
	\item For every $A,B \in \mathcal{O}$ there is an optimal map $f : A \rightarrow B$ with $ Lip(f) = \inf \{ Lip(h) | h \mbox{ is an } \mathcal{O} \mbox{ -map from } A \mbox{ to } B \} $
\item	 $d(A, B) \geq 0$ with equality only if $A = B$.
	\item $ d(A, C) \leq d(A, B) + d(B, C)$ for all $A,B,C \in \mathcal{O}$ .
	\item  $d$ is a geodesic metric. Moreover, a path that realises the distance $d(A,B)$ for every $A,B \in \mathcal{O}$ can be chosen to be piecewise linear, and even linear in each simplex.
\item $Out(G, \mathcal{O})$ acts on $\mathcal{O}$ by isometries.
\end{enumerate}
\end{prop}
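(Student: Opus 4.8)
The plan is to treat the five items separately, since each is a self-contained property of the stretching factor $\Lambda_R$. For (i), I would first record the elementary inequality $\Lambda_R(A,B) \leq Lip(f)$ valid for \emph{any} $\mathcal{O}$-map $f$: given a hyperbolic $g$, its period $L_g$ maps to a (possibly non-reduced) path of $B$-length at most $Lip(f)\,\ell_A(g)$, whose reduced form still represents $g$, so $\ell_B(g)\leq Lip(f)\,\ell_A(g)$; taking the supremum gives the bound. The content is therefore to produce a map attaining the infimum and to show that for it the equality $Lip(f)=\Lambda_R(A,B)$ holds. Existence follows from an Arzel\`a--Ascoli argument: maps may be taken linear on edges and are pinned on the finitely many orbits of non-free vertices by the previous lemma, so a minimizing sequence has a convergent subsequence whose limit is again an $\mathcal{O}$-map. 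The substantive point is the equality $Lip(f)=\Lambda_R(A,B)$ for this minimizer; it rests on the local structure of the tension (maximally stretched) subgraph, showing that the maximal stretch of an optimal map is always realized along a \emph{candidate} loop — a simple loop, figure-eight or barbell in the quotient graph — so that the worst-case edge stretch is witnessed by an honest hyperbolic conjugacy class.

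Items (ii) and (iii) are then formal. For (iii), from $\ell_C(g)/\ell_A(g)=\bigl(\ell_C(g)/\ell_B(g)\bigr)\bigl(\ell_B(g)/\ell_A(g)\bigr)\leq \Lambda_R(B,C)\,\Lambda_R(A,B)$ I take the supremum over $g\in Hyp(\mathcal{O})$ to obtain $\Lambda_R(A,C)\leq\Lambda_R(A,B)\,\Lambda_R(B,C)$, and applying $\log$ yields the triangle inequality. For (ii), $d(A,B)\geq 0$ because an optimal map must have $Lip(f)\geq 1$: the normalization makes the total edge length of $A$ and $B$ equal to $1$, and a $G$-equivariant surjection inducing a surjection of quotient graphs cannot strictly contract every edge. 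For the equality case, $d(A,B)=0$ gives $\Lambda_R(A,B)=1$, hence $\ell_B(g)\leq\ell_A(g)$ for all $g$; the optimal map $f$ then has $Lip(f)=1$ and, being a volume-nonincreasing surjection between trees of equal volume, must be an isometry, so $A=B$ after invoking that length functions determine the tree (Culler--Morgan). For (v), the $Out(G,\mathcal{O})$-action only relabels the $G$-action, so $\ell_{\phi(A)}(g)=\ell_A(\phi^{-1}g)$ and $\phi$ permutes $Hyp(\mathcal{O})$; hence the supremum defining $\Lambda_R$ is unchanged and $d(\phi A,\phi B)=d(A,B)$.

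The main obstacle is (iv), the construction of geodesics. I expect to build a candidate path in two stages. First, within a single simplex $\Sigma_T$, I would rescale edge lengths linearly (a \emph{stretching path}) and verify directly from the definition that the log-stretching factor is additive along it, so these segments are geodesic and linear in the simplex. The difficulty is concatenating across simplices and showing the resulting piecewise-linear path is globally distance-minimizing; this requires a \emph{folding}, or train-track, argument: guiding the optimal map of (i) so that its tension graph is folded at unit speed, and checking that the induced path in $\mathcal{O}$ realizes $d$ at every time. Controlling that the folding stays inside $\mathcal{O}$ — never collapsing a subgraph carrying an axis, which would leave the space — is the delicate point, and is precisely where the ``missing faces'' structure of the simplices $\Sigma_T$ must be used.
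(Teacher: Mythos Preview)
The paper does not actually prove this proposition: it is quoted, with attribution, from Francaviglia--Martino \cite{FM} and used as a black box for the remainder of the paper. There is therefore no in-paper argument to compare your sketch against. Your outline is the standard one and tracks what \cite{FM} does: Arzel\`a--Ascoli together with linearity-on-edges and the pinning of non-free vertices for existence of an optimal map; the tension-graph / candidate analysis for the equality $Lip(f)=\Lambda_R(A,B)$; direct multiplicativity of stretch ratios for (iii); the relabelling observation for (v); and the stretch-then-fold construction of geodesics for (iv), where, as you note, the genuine work is keeping the folding path inside $\mathcal{O}$.

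Two small points. In (v) the paper's convention is $g*x=\phi(g)x$, which gives $\ell_{\phi(A)}(g)=\ell_A(\phi(g))$ rather than $\ell_A(\phi^{-1}g)$; this does not affect your conclusion since $\phi$ permutes $Hyp(\mathcal{O})$ either way. In the equality case of (ii), your volume argument is correct but deserves one extra line: a $1$-Lipschitz $G$-equivariant surjection descends to a $1$-Lipschitz surjection between quotient graphs of total length $1$, and the co-area inequality $\int_{\Gamma_B}\#f^{-1}(y)\,dy\le Lip(f)\cdot \mathrm{vol}(\Gamma_A)$ then forces $\#f^{-1}(y)=1$ a.e., hence $f$ is an isometry; Culler--Morgan finishes as you say.
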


\subsection{Candidates and more}

\begin{figure}[t]
	
	\includegraphics[scale=0.8]{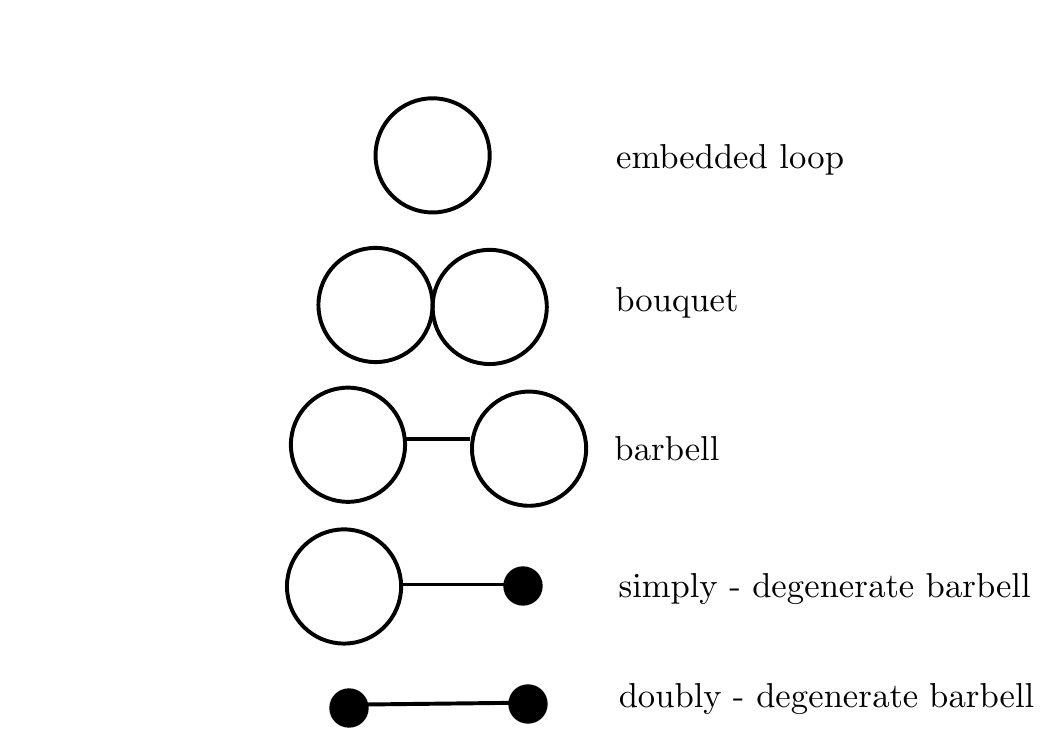}
	\centering
	\caption{Projections of candidates}
\end{figure}

\begin{defin}\label{Candidates}
 An element $g \in G $ is a candidate in $T$, if it is hyperbolic in $T$ and, denoting by $axis_T (g)$ its
 axis in $T$, there exists $v \in axis_T (g)$ such that the segment $[v, gv]$ projects to a loop $\alpha$ in the quotient graph $\Gamma := G / T$ which is either (see also Figure 1)
 \begin{enumerate}
 \item an embedded loop, or
 \item a bouquet of two circles in $\Gamma$, i.e. $\alpha = \alpha_1 \alpha_2$, where $\alpha_1$ and $\alpha_2$ are embedded circles in $\Gamma$ which meet in a single point, or
 \item a barbell graph, i.e. $\alpha = \alpha_1 \beta \alpha_2 \bar{ \beta}$, where $\alpha_1$ and $\alpha_2$ are embedded circles in $\Gamma$ that do not meet, and $\beta$ is an embedded path in $\Gamma$ that meets $\alpha_1 $ and $\alpha_2$ only at their  origin (and we denote by $\bar{ \beta}$  the path $\beta$ crossed in the opposite direction), or
 \item a simply-degenerate barbell, i.e. $\alpha$ is of the form $\alpha \beta \bar{ \beta}$  , where $\alpha$ is an embedded loop in $\Gamma$ and $\beta$ is an embedded path in $\Gamma$, with two distinct endpoints, which meets $\alpha$ only at its origin, and whose terminal endpoint is a non-free vertex in $\Gamma$, or
 \item a doubly-degenerate barbell, i.e. $\alpha$ is of the form $\beta \bar{ \beta}$, where $ \beta$ is an embedded path in $\Gamma$ whose two distinct endpoints are different non-free vertices.
  \end{enumerate}
\end{defin}

We denote by $C_T$ the set of candidates in $T$.
\begin{defin}
Let $g,g' \in G$ be hyperbolic elements in $\mathcal{O}$, for which $pr(axis_T(g)) = pr(axis_T(g'))$, or in other words they project to the same path to the quotient $\Gamma = G/T$, then we say that they are projectively equivalent in $T$ (or just projectively equivalent).
\end{defin}
\begin{remark}
Note that if $g,g'$ are projectively equivalent in $T$, then for any $\ell \in \Sigma_T$ : $\tau (g) = \tau(g')$, $ \ell(g) = \ell (g')$. Moreover,
there are finitely many projectively inequivalent hyperbolic elements of bounded length. In particular, there are finitely many projectively inequivalent candidates.
\end{remark}
The next proposition shows that the distance is realised on a candidates and it is essential for our arguments. In particular:
\begin{prop}
	For any $A,B \in \mathcal{O}$,
	\begin{equation*}
	d(A, B) =    \max \limits _ {g \in C_A}  \frac{\ell_B(g)}{\ell_A (g)}
	\end{equation*}
\end{prop}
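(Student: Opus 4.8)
The plan is to show that the supremum defining the stretching factor is attained on the finite set of candidates. The easy inequality is immediate: every candidate is a hyperbolic element of $\mathcal{O}$, so
\[
\max_{g\in C_A}\frac{\ell_B(g)}{\ell_A(g)}\ \le\ \sup_{g\in Hyp(\mathcal{O})}\frac{\ell_B(g)}{\ell_A(g)}=\Lambda_R(A,B),
\]
and since $d(A,B)=\log\Lambda_R(A,B)$ this already gives one half of the stated identity. To see that the right-hand side is a genuine maximum rather than a supremum, I would use the Remark: projectively equivalent candidates are conjugate in $G$ (their axes are $G$-translates of one another), hence have equal translation length in \emph{every} $G$-tree, so $g\mapsto \ell_B(g)/\ell_A(g)$ is constant on each of the finitely many projective classes. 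It therefore remains to produce a single candidate $g$ with $\ell_B(g)/\ell_A(g)=\Lambda_R(A,B)$.

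Next I would fix an optimal $\mathcal{O}$-map $f:A\to B$, which exists by the Proposition of Francaviglia--Martino, with $Lip(f)=\lambda:=\Lambda_R(A,B)$, and pass to the finite quotient graph $\Gamma:=G/A$. Define the \emph{tension subgraph} $\Delta\subseteq\Gamma$ to be the union of the closed edges on which $f$ attains the maximal slope $\lambda$. Optimality forces $\Delta\neq\emptyset$, and more importantly it controls the local structure of $\Delta$: if $\Delta$ had a free vertex of valence one, the single tense edge there could be shortened (its $f$-image homotoped rel endpoints) to strictly decrease the slope near that vertex without creating new tense edges, contradicting the optimality of $f$. Thus every free vertex of $\Delta$ has valence at least two, while the non-free vertices are allowed to be terminal.

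The combinatorial heart of the argument is then the claim that any non-empty subgraph $\Delta$ of $\Gamma$ whose free vertices all have valence $\ge 2$ contains the projection of the axis of a candidate. I would argue by a minimality/surgery analysis on $\Delta$: if $\Delta$ contains an embedded loop we already have a candidate of type (i); otherwise $\Delta$ is homotopy equivalent to embedded loops joined by arcs, and choosing a configuration of minimal total length and reading off the corresponding immersed loop produces exactly one of the remaining four shapes --- a bouquet of two circles (ii), a barbell (iii), a simply-degenerate barbell when a minimal path terminates at a non-free vertex (iv), or a doubly-degenerate barbell when $\Delta$ reduces to a single arc between two distinct non-free vertices (v). The non-free vertices are precisely what license the degenerate cases, which is why the candidate list here is longer than in the classical free setting.

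Finally, having located a candidate $g$ whose period $L_g$ projects into $\Delta$, I would note that every edge crossed by $L_g$ is stretched by exactly $\lambda$, and that the train-track/gate structure carried by the tension graph of an optimal map keeps $f$ locally injective along $L_g$, so its image is a reduced (hence geodesic) representative of $axis_T(g)$ in $B$. Consequently $\ell_B(g)=\lambda\,\ell_A(g)$, that is $\ell_B(g)/\ell_A(g)=\lambda=\Lambda_R(A,B)$, which together with the easy inequality finishes the proof. The main obstacle is the middle step: extracting a candidate from the tension graph while guaranteeing its image remains legal. This is exactly where one must exploit optimality twice --- once to forbid valence-one free vertices in $\Delta$, and once through the gate structure to rule out back-tracking along $L_g$ --- and then run the case analysis realising one of the five configurations in the relative setting.
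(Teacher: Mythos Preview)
The paper does not give its own proof of this proposition: it is quoted in Section~2.5 as a known result from Francaviglia--Martino \cite{FM}, so there is no in-paper argument to compare against. Your outline is indeed the Francaviglia--Martino strategy (optimal map, tension graph, extract a candidate), but the sketch has two genuine problems.

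First, a small but real error: projectively equivalent elements are \emph{not} conjugate in $G$ in general. The paper's Remark only says that projectively equivalent $g,g'$ satisfy $\ell(g)=\ell(g')$ for every metric $\ell\in\Sigma_T$ on the \emph{same} underlying tree $T$; it says nothing about $\ell_B$ for a different tree $B$. For instance, in $G_1*G_2$ the elements $g_1g_2$ and $g_1'g_2'$ (with $g_i,g_i'\in G_i\setminus\{1\}$) project to the same doubly-degenerate barbell in $A$ but are typically non-conjugate and can have different lengths in $B$. So your finiteness justification for ``max rather than sup'' is wrong. Fortunately this is not fatal: once you produce a single candidate achieving $\Lambda_R(A,B)$, the maximum over $C_A$ exists for that reason alone.

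Second, and more serious, your extraction step conflates two different conditions. You argue that every free vertex of the tension graph $\Delta$ has valence $\ge 2$, and then run a purely combinatorial case analysis to find some candidate shape inside $\Delta$. But a candidate whose period merely lies in $\Delta$ need not have $\ell_B(g)=\lambda\,\ell_A(g)$: the image under $f$ can backtrack at an \emph{illegal} turn and lose length. What optimality actually buys (and what the FM proof uses) is that one may arrange at least two \emph{gates} at every vertex of $\Delta$; the case analysis must then be carried out in the category of \emph{legal} paths, producing a candidate that takes only legal turns. Your sentence ``the train-track/gate structure \ldots\ keeps $f$ locally injective along $L_g$'' asserts the conclusion rather than proving it: legality is a property of the specific loop you choose, not of arbitrary loops in $\Delta$. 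The correct argument first upgrades ``valence $\ge 2$'' to ``$\ge 2$ gates'' (possibly after perturbing $f$ while keeping it optimal), and then the five-case analysis is performed on legal paths, which is where the candidate list (i)--(v) genuinely arises.
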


\section{Basic Lemma}
Let's assume that $A, B \in \mathcal{O}$. One main question is that if we change slightly $B$, can we compute the distance $d(A, B)$ using the same candidate of $A$? We will prove that this is possible under some conditions.

\begin{defin}
	A closed convex cone, in a finite dimensional real vector space $V$, is a closed subset $C$ of $V$ such that $v,w \in C$ implies that $tv +sw \in C$ for all $t,s \in [0, \infty )$.
	
\end{defin}
One main example of a closed convex cone is the set of integrable vectors in $T_{\ell} (\Sigma_{T})$.\\\\
\textbf{Notational Convention}: When we restrict our attention to a specific simplex $\Sigma _T$ for a specific $G$-tree $T$ in Outer space, we may identify the point $(T, *, \ell)$, where we denote the $G$-action on $T$ by $*$, by only specifying the metric $\ell$.\\
\\
Firstly, we prove a very useful proposition which states that in a specific case we can use the same candidate which realises the distance.
\begin{prop}\label{Basic Prop}
\begin{enumerate}
\item Let $\tau \in T_{\ell} (\Sigma_{T})$ be an integrable vector. Then there is a candidate $\alpha$ in $\Sigma_{T}$ such that
\begin{equation*}
d(\ell, \ell + t \tau ) = log \frac{ (\ell + t \tau)(\alpha)}{\ell(\alpha)}
\end{equation*}
for all sufficiently small $t \geq 0$, i.e. the same candidate $a$ realises the distance $d(\ell, \ell + t \tau )$ for small $t$. Moreover, $\alpha$ has the property that for any other hyperbolic element $g$, $\frac{\tau(\alpha)}{\ell(\alpha)} \geq \frac{\tau(g)}{\ell(g)}$ .
\item $\lim _{t \rightarrow 0^+} \frac{d(\ell, \ell + t \tau )}{t} = \frac{\tau (\alpha)}{\ell(\alpha)}$, where $\alpha$ is the candidate of item (i).
\item The set of integrable vectors in $T_{\ell} (\Sigma_{T})$ can be written as a finite union of closed convex cones $B_1, B_2, ..., B_M$ such that for any $B_i$, there is a (projective equivalence class of a) candidate $\alpha_i$ that realises the distance $d(\ell, \ell + t \tau )$ for any $\tau \in B_i$ and for all sufficiently small $t \geq 0$.
\end{enumerate}
\end{prop}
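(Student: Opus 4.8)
The plan is to reduce everything to the single observation that moving within the simplex $\Sigma_T$ makes the distance an \emph{affine} function of $t$. The two facts I would lean on are that the distance is realised on a candidate (the proposition computing $d$ via $C_A$) and that both $\ell(g)$ and $\tau(g)$ are computed edge-by-edge along the period $L_g$. The crucial preliminary remark is that $L_g$ is a combinatorial object attached to the simplicial $G$-tree $T$ (the reduced loop in $\Gamma = G/T$ on which a period projects), so it does \emph{not} depend on the metric. Consequently I get the exact identity $(\ell+t\tau)(g)=\ell(g)+t\,\tau(g)$ for every hyperbolic $g$, and, since candidates are defined purely from $\Gamma$, the candidate set $C_T$ is the same for $\ell$ and for $\ell+t\tau$. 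Integrability of $\tau$ guarantees $\ell+t\tau\in\Sigma_T$ for all small $t\geq 0$, so these manipulations are legitimate.

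For part (i) I would feed the above into the candidate formula to obtain, for small $t\geq 0$,
\begin{equation*}
d(\ell,\ell+t\tau)=\log\max_{g\in C_T}\frac{(\ell+t\tau)(g)}{\ell(g)}=\log\max_{g\in C_T}\Big(1+t\,\frac{\tau(g)}{\ell(g)}\Big).
\end{equation*}
By the Remark there are only finitely many distinct values $\tau(g)/\ell(g)$ over $g\in C_T$ (projectively equivalent candidates share the same $\tau(g)$ and $\ell(g)$), so the maximum is attained; and because $t\geq 0$ one has $\max_g\big(1+t\,\tau(g)/\ell(g)\big)=1+t\max_g \tau(g)/\ell(g)$, whose maximiser $\alpha$ is independent of $t$. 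Thus the one candidate $\alpha$ realising $\max_{g\in C_T}\tau(g)/\ell(g)$ realises $d(\ell,\ell+t\tau)$ for all small $t\geq 0$. The ``moreover'' clause I would deduce by applying the candidate proposition to $A=\ell$, $B=\ell+t\tau$ with $t>0$: it equates $\max_{g\in C_T}$ with the supremum over \emph{all} hyperbolic elements, forcing $\tau(g)/\ell(g)\leq\tau(\alpha)/\ell(\alpha)$ for every hyperbolic $g$.

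Part (ii) is then immediate from the formula in (i): substituting and using $\log(1+ct)/t\to c$,
\begin{equation*}
\lim_{t\to 0^+}\frac{d(\ell,\ell+t\tau)}{t}=\lim_{t\to 0^+}\frac{\log\big(1+t\,\tau(\alpha)/\ell(\alpha)\big)}{t}=\frac{\tau(\alpha)}{\ell(\alpha)}.
\end{equation*}
For part (iii) I would enumerate the finitely many projective classes of candidates as $\alpha_1,\dots,\alpha_M$ and set
\begin{equation*}
B_i=\Big\{\tau\in T_\ell(\Sigma_T): \tau \text{ integrable and } \tfrac{\tau(\alpha_i)}{\ell(\alpha_i)}\geq\tfrac{\tau(\alpha_j)}{\ell(\alpha_j)} \text{ for all } j\Big\}.
\end{equation*}
Each defining inequality is linear in $\tau$ (the $\ell(\alpha_j)>0$ are fixed and $\tau\mapsto\tau(\alpha_j)$ is linear), so $B_i$ is the intersection of the cone of integrable vectors — a closed convex cone by the running example — with finitely many closed half-spaces through the origin, hence itself a closed convex cone. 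The $B_i$ cover all integrable vectors since some $\alpha_i$ always attains the maximum, and by part (i) the candidate $\alpha_i$ realises $d(\ell,\ell+t\tau)$ throughout $B_i$.

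There is no deep obstacle here; the genuine content is recognising that $d(\ell,\ell+t\tau)$ becomes $\log(1+ct)$ in $t$. The only points demanding care, which I would state explicitly, are the exact linearity $(\ell+t\tau)(g)=\ell(g)+t\,\tau(g)$ — which hinges entirely on the metric-independence of the period $L_g$ — and the reduction of the maximum over the infinitely many candidates to a maximum over the finitely many projective classes via the Remark. Once these are in place, all three assertions follow by elementary bookkeeping.
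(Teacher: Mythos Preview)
Your proof is correct and follows essentially the same route as the paper's: reduce to the finitely many projective classes of candidates, observe that for $t>0$ the inequality $\frac{(\ell+t\tau)(\alpha)}{\ell(\alpha)}\geq\frac{(\ell+t\tau)(g)}{\ell(g)}$ simplifies to the $t$-independent linear inequality $\frac{\tau(\alpha)}{\ell(\alpha)}\geq\frac{\tau(g)}{\ell(g)}$, and let these inequalities carve out the cones $B_i$. Your write-up is somewhat more explicit than the paper's in spelling out the linearity $(\ell+t\tau)(g)=\ell(g)+t\,\tau(g)$ and the resulting formula $d(\ell,\ell+t\tau)=\log\bigl(1+t\max_g\tau(g)/\ell(g)\bigr)$, but the logical content is identical.
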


\begin{proof}
Let $\alpha$ be a candidate in $T$ that realises $d(\ell, \ell + t \tau )$. This is equivalent to the inequalities :
\begin{equation*}
\frac{ (\ell + t \tau)(\alpha)}{\ell(\alpha)} \geq \frac{ (\ell + t \tau)(g)}{\ell(g)}
\end{equation*}
for all hyperbolic elements $g$ in $\mathcal{O}$. But since the distance can be realised by a candidate, it is enough to consider these inequalities only for the candidates. Moreover, as we have seen we have finitely many classes of projectively inequivalent candidates and so we need finitely many of these inequalities. Let choose a representative of each class and let's denote them by $\alpha _i \in C_T$, $i= 1,..., M $.\\
On the other hand, we can simplify these inequalities to $\frac{\tau(\alpha)}{\ell(\alpha)} \geq \frac{\tau(g)}{\ell(g)}$ when $t >0$.\\
This is a finite system of linear inequalities which determines a closed convex cone $B_i$ associated to each $\alpha_i$ as in (iii) and more specifically
\begin{equation*}
 \tau \in B_i \Longleftrightarrow \frac{\tau(\alpha _i)}{\ell(\alpha _i)} \geq \frac{\tau(a_j)}{\ell(a_j)} , \mbox{ for every } j=1,...,M.
\end{equation*}
The inequalities  do not depend on $t$ and so we have (i), since we can choose the same candidate to realise the distance for all small $t$  and the second part of the statement is evident by the discussion above.\\
Finally, using the item (i), we can divide by $t$ in order to calculate the limit which is straightforward and then the item (ii) follows.
\end{proof}

\section{Norm}
As in the section above, we fix a tree $T$.
We can now define a function in $\Sigma_{T} \times T_{\ell} (\Sigma_{T})$, which is a norm. Therefore, we will have a norm in the tangent space which induces the Lipschitz metric. We fix a metric $\ell \in \Sigma_T$ and we give the next definition: 
\begin{defin}
Let $\tau \in T_{\ell} (\Sigma_{T})$. Then we define:
\begin{equation*}
\| (\ell, \tau) \| ^{L} = \ \sup \Big \{ \frac{\tau(g)}{\ell(g)} \Big | g  \in Hyp ( \mathcal{O})  \Big \} \end{equation*}

\end{defin}
We will prove that we have an (asymmetric) Finsler norm for the Lipschitz metric.

\begin{prop}
\begin{enumerate}
\item If $\tau$ is integrable, then $\| (\ell, \tau) \| ^{L} = \lim \limits _{t \rightarrow 0^+} \frac{d(\ell, \ell + t \tau )}{t}$ 
\item The supremum in the definition is achieved on a candidate of $\Sigma_{T}$.
\item $\| (\ell, \tau) \| ^{L}$ is continuous on $T(\Sigma_{T})$.
\item $\| (\ell, \tau) \| ^{L} \geq 0$ with equality iff $\tau = 0$.
\item $\| (\ell, \tau_1 + \tau_2) \| ^{L} \leq \| (\ell, \tau_1) \| ^{L} + \| (\ell, \tau_2) \| ^{L}$
\item If $c > 0$, then $\| (\ell, c\tau) \| ^{L} = c \| (\ell, \tau) \| ^{L}$
\end{enumerate}

\end{prop}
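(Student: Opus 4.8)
The plan is to dispatch the six items in the order (vi), (v), (ii), (iii), (i), (iv), since the algebraic properties are immediate and feed the analytic ones. For (vi) and (v) I would argue straight from the definition. For $c>0$, the positive scalar pulls through the supremum, $\sup_g \frac{(c\tau)(g)}{\ell(g)} = c\,\sup_g\frac{\tau(g)}{\ell(g)}$; the fact that $c$ must be positive to do this is precisely the source of the asymmetry. Since $g\mapsto\tau(g)$ is additive over the edges crossed by the period $L_g$, we have $(\tau_1+\tau_2)(g)=\tau_1(g)+\tau_2(g)$, so (v) follows from the elementary inequality $\sup(f+h)\le\sup f+\sup h$ applied to the functions $g\mapsto \tau_i(g)/\ell(g)$.

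The heart of the matter is (ii). I would mimic the proof of the displayed proposition that the distance is realised on candidates, the only new feature being that $\tau$ is a \emph{signed} functional rather than a genuine length function. Given any hyperbolic $g$, decompose its period $L_g$ into candidate sub-loops; this is a purely combinatorial operation in $\Gamma=G/T$, independent of $\ell$ and $\tau$. Both $\tau$ and $\ell$ are additive over this decomposition, and each candidate piece $c$ satisfies $\ell(c)>0$ because it is hyperbolic in $\mathcal{O}$. The mediant inequality $\frac{\sum a_i}{\sum b_i}\le\max_i\frac{a_i}{b_i}$, which holds for arbitrary numerators as soon as the denominators $b_i$ are positive, then yields $\frac{\tau(g)}{\ell(g)}\le\max_c\frac{\tau(c)}{\ell(c)}$. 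By the Remark there are finitely many projectively inequivalent candidates $\alpha_1,\dots,\alpha_M$, and projectively equivalent elements share the same ratio, so the supremum is in fact a finite maximum, attained on some $\alpha_i$; in particular $\|(\ell,\tau)\|^L$ is finite and well defined.

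Continuity (iii) then follows readily: by (ii) we may write $\|(\ell,\tau)\|^L=\max_i\frac{\tau(\alpha_i)}{\ell(\alpha_i)}$ over the fixed finite list, which depends only on the combinatorics of $\Gamma$ and is therefore constant over $\Sigma_T$. Each $\tau\mapsto\tau(\alpha_i)$ is linear and each $\ell\mapsto\ell(\alpha_i)$ is linear and strictly positive on $\Sigma_T$ (no metric kills a hyperbolic element), so every ratio is continuous on $T(\Sigma_T)$, and a maximum of finitely many continuous functions is continuous. Item (i) is then read off from Proposition \ref{Basic Prop}: for integrable $\tau$ its part (ii) gives $\lim_{t\to0^+}\frac{d(\ell,\ell+t\tau)}{t}=\frac{\tau(\alpha)}{\ell(\alpha)}$ for the candidate $\alpha$ of maximal ratio, which by definition and by (ii) equals $\|(\ell,\tau)\|^L$.

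For (iv), nonnegativity comes from (i) and (iii): when $\ell$ lies in the open part of $\Sigma_T$ every $\tau$ is integrable, so $\|(\ell,\tau)\|^L=\lim_{t\to0^+}d(\ell,\ell+t\tau)/t\ge0$ because $d\ge0$; approximating an arbitrary $\ell\in\Sigma_T$ by interior metrics and invoking (iii) extends $\ge0$ everywhere. For strictness on the interior I would use the exact identity $d(\ell,\ell+t\tau)=\log\bigl(1+t\,\|(\ell,\tau)\|^L\bigr)$ for small $t\ge0$, immediate from (ii) and Proposition \ref{Basic Prop}: if $\tau\ne0$ then $\ell+t\tau\ne\ell$, so $d>0$, forcing $\|(\ell,\tau)\|^L>0$. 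The genuinely delicate case is strictness at a boundary metric with a non-integrable $\tau\ne0$, and this is the step I expect to be the main obstacle. Here I anticipate needing the combinatorial input that the candidates carry a balanced positive combination covering every edge equally, so that $\sum_j w_j\,\tau(\alpha_j)=C\sum_e\tau(e)=0$ with all $w_j>0$; if $\|(\ell,\tau)\|^L=0$ then every $\tau(\alpha_j)\le0$, which forces each summand to vanish, and combined with the fact that the candidate crossing-vectors span $\mathbb{R}^{E_1(T)}$ this gives $\tau=0$. Producing this balanced family and the spanning statement is the only non-formal ingredient; everything else is either a one-line consequence of the definition or a direct transcription of the candidate-decomposition argument to a signed functional.
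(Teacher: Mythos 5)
Your treatments of (i), (iii), (v) and (vi) essentially coincide with the paper's (via Proposition \ref{Basic Prop}, finiteness of the projective classes of candidates, and the definition of the supremum). The two places where you depart from the paper --- item (ii) and the boundary case of item (iv) --- each contain a genuine gap.

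For (ii): the assertion that the period of an arbitrary hyperbolic element decomposes ``purely combinatorially'' into candidate sub-loops is false. Take the paper's own setting $G=G_1\ast G_2\ast G_3$ ($n=0$, $r=3$), but with the tree whose quotient is a star: a free central vertex $u$ joined by edges $f_1,f_2,f_3$ to the three non-free vertices $v_1,v_2,v_3$. Since the quotient graph is a tree, the only candidates are the three doubly-degenerate barbells over the paths $[v_i,v_j]$, with edge-crossing vectors $(2,2,0)$, $(2,0,2)$, $(0,2,2)$. The hyperbolic element $g=g_1g_2g_3$ (each $g_i\in G_i$ nontrivial) has period crossing every edge twice, i.e.\ vector $(2,2,2)$, and since the three candidate vectors are linearly independent, the \emph{only} way to write $(2,2,2)$ over them is $\tfrac{1}{2}(2,2,0)+\tfrac{1}{2}(2,0,2)+\tfrac{1}{2}(0,2,2)$. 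A concatenation into candidate sub-loops would force nonnegative \emph{integer} coefficients, so no such decomposition exists. What your mediant argument actually needs is only membership of the crossing vector in the nonnegative \emph{real} cone spanned by candidate vectors; that is true, but it is not combinatorially obvious --- via Farkas' lemma (test a functional $\tau_0$ that is nonnegative on candidates, and apply the candidates theorem to the pair of genuine metrics $\ell$ and $\ell-t\tau_0$ for $\ell$ interior and $t$ small) it is essentially equivalent to the Francaviglia--Martino theorem that the distance is realised on candidates. So your (ii), as written, silently assumes a statement of the same depth as the theorem it is meant to exploit. The paper avoids this circle entirely: for integrable $\tau$ it reads (ii) off from Proposition \ref{Basic Prop}, whose proof only ever invokes the candidates theorem for pairs of honest metrics $\ell$, $\ell+t\tau$; for non-integrable $\tau$ it perturbs $\ell$ to a nearby $\ell'$ making $\tau$ integrable and, using finiteness of candidate classes, carries the realising candidate back to $(\ell,\tau)$.

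For (iv): your interior-case argument is the paper's, but you leave the boundary case (non-integrable $\tau\neq 0$ at a metric with zero-length edges) as an admitted obstacle, and the repair you sketch --- a positive combination of candidates equal to the all-ones vector, plus a spanning statement --- is unproven and would again require duality-type input. The paper needs none of this, because of a simple observation you missed: $\tau(g)$ does not depend on $\ell$. Replace $\ell$ by any interior metric $\ell''$; then $\tau$ is integrable relative to $\ell''$, and $0<d(\ell'',\ell''+t\tau)=\log\bigl(1+t\|(\ell'',\tau)\|^{L}\bigr)$ yields a hyperbolic element $g$ with $\tau(g)>0$. Since every $\ell\in\Sigma_T$ gives positive length to every hyperbolic element, this same $g$ shows $\|(\ell,\tau)\|^{L}\geq \tau(g)/\ell(g)>0$ for the original, possibly boundary, metric $\ell$; the case division you struggle with never arises.
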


\begin{proof}
	\begin{enumerate}

\item Since $\tau$ is integrable, we can use \ref{Basic Prop} (iii) and we have that there is some $i$ s.t. $\tau \in B_i$, but then using the item (ii) of the same proposition we get that there exists some candidate $\alpha _i$ with the property
\begin{equation*}
 \lim _{t \rightarrow 0^+} \frac{d(\ell, \ell + t \tau )}{t} = \frac{\tau (\alpha_i)}{\ell(\alpha_i)}\end{equation*} 
Therefore, (i)  of Proposition \ref{Basic Prop} establishes that $\frac{\tau(\alpha_i)}{\ell(\alpha_i)} \geq \frac{\tau(g)}{\ell(g)}$ for any other hyperbolic element $g$ in $\mathcal{O}$, which means that the supremum in the definition can be achieved on the candidate $\alpha_i$.

\item If $\tau$ is integrable, then as we have seen above there is a candidate $\alpha$ with the property $\| (\ell, \tau) \| ^{L} = \frac{\tau(\alpha)}{\ell(\alpha)}$, and so the supremum is realised on some candidate of $T$.\\
Now let $\tau$ be not integrable, which means that there is some edge $e$ s.t. $\ell(e) = 0$ and $\tau(e) < 0$. But we can always find some $\ell '$ which is as close as we want to $\ell$ (which means that $\ell ' (e) = \epsilon$ for small $\epsilon$) so that $\tau$ becomes integrable (relative to $\ell'$). Therefore if the perturbation is sufficiently small, the candidate that works for the pair $(\ell, \tau)$, works for $(\ell ', \tau)$ as well.
 
\item This follows from (ii), since we can replace the sup of the definition by a maximum over a finite set (of projectively inequivalent classes of candidates for graphs in the simplex $\Sigma_T$, i.e. the projections of candidates to $\Gamma = G / T$).

\item If $\tau \neq 0$, then we can produce some $g \in Hyp (\mathcal{O})$ so that $\tau(g) > 0$. We can do this without dependence on $\ell$, so we may assume that (as above, changing $\ell$) $\tau$ is integrable, and then by \ref{Basic Prop}(i) and the item (i) above, we have that for sufficiently small $t$: $0 < d(\ell, \ell +t\tau) = log (1 + t \| (\ell, \tau) \| ^{L})$, which implies that there exists such element $g$.
\\
\par
(v) and (vi) are straightforward, using the definition and the properties of supremum.

\end{enumerate}

	\end{proof}
	
		\begin{figure}[t]
			
			\includegraphics{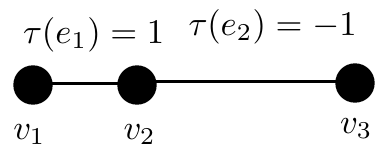}
			\centering
			\caption{The quotient $G/T$}
		\end{figure}
		
		However, as in the free case the norm is not quasi-symmetric. For $n>1$, we can essentially use the examples (adjusted appropriately) of the free case. We give an example if there is no free part.
		\begin{exa}
			For $n=0$ and $r=3$ and so $G = G_1 \ast G_2 \ast G_3$. Let $T$ be the tree with three orbits of non-free vertices which we denote by $v_1,v_2,v_3$ with vertex groups $G_1, G_2, G_3$, respectively. Let's also suppose that there are exactly two orbits of edges and namely we denote by $e_1$ the edge connecting $v_1$ with $v_2$ and by $e_2$ the edge connecting $v_2$ with $v_3$. Moreover, if we assume that $\ell(e_1) =\epsilon , \ell(e_2) = 1- \epsilon$ and $\tau(e_1) = 1, \tau(e_2) = -1$, it's easy to see that there are three types of candidates (see Figure 2). More specifically, the candidates $C_1, C_2, C_3$ that correspond to  $[v_1,v_2]$, $[v_2,v_3]$, $[v_1,v_3]$, respectively. Their lengths are $2\epsilon$ for $C_1$, $ 2(1-\epsilon)$ for $C_2$ and $2$ for $C_3$ and their $\tau$-values are $2$ for $C_1$, $-2$ for $C_2$ and $0$ for $C_3$. Therefore we can see that $\| (\ell, \tau) \| ^{L} = \frac{1}{\epsilon}$, while $ \| (\ell, - \tau) \| ^{L} = \frac{1}{1-\epsilon}$. Now sending $\epsilon$ to $0$, we get  $\| (\ell, \tau) \| ^{L} \rightarrow \infty$, while $ \| (\ell, - \tau) \| ^{L} \rightarrow 1$.
		\end{exa}
\section{Corrected norm}
We would like to define a new norm on $ \Sigma_T \times T_{\ell} (\Sigma_{T})$ which is quasi- symmetric. As in the free case, we have to correct $\| \cdot \| ^{L} $ by adding the directional derivative of a function which is the sum of - $\log$'s of the lengths of candidates. The first problem is that the candidates are not the same, if we change the marking. Therefore, if $n>1$ we need to consider the double covers of trees with the same non-free vertices. However, we have to face one other problem which is the existence of $G_i$'s and which makes the homology insufficient. Instead, we use the "relative abelianisation" of the group $G$ which transforms the free product into a direct product. In order to produce a finite set of representatives our strategy, roughly speaking, is that for the "free part" we can apply the homology with $\mathbb{Z}_2 $ -coefficients and for the $G_i$'s we consider an equivalence relation.\\
For every $T \in \mathcal{O}$, we denote by $P = \pi _1 (G/T)$ the fundamental group of the quotient, which is a graph of groups with vertex groups isomorphic to the $G_i$'s and trivial edges groups. The idea is to consider the "relative abelianisation" of $\pi_1 (G / T)$. More specifically, there is a natural homomorphism $\psi$ from $\pi_1 (G / T)$ to $G_1 \oplus G_{2} \oplus ... \oplus G_r \oplus H_1(\Gamma, \mathbb{Z}_2) $, where $H_1(\Gamma, \mathbb{Z}_2)$ is isomorphic to $\mathbb{Z}_2 ^{n}$. We give the next definition:
\begin{defin}
 We define the set $H(T)$ as the set of equivalence classes in $G_1 \oplus G_{2} \oplus ... \oplus G_r \oplus H_1(\Gamma, \mathbb{Z}_2) $ induced by the equivalence relation:\\ $(g_1, ..., g_r, y_1) \sim (g' _1, ..., g' _r, y_2)$, where $g_i, g' _i \in G_i, y_1, y_2 \in  H_1(\Gamma, \mathbb{Z}_2) $ iff ($g_i \neq 1_{G_i}$ iff $g' _i \neq 1_{G_i} $) and $y_1 = y_2$.\\
 In order to simplify the notation, we exclude the $0$ class (which is the class which can be represented by the element $(1, 1,...,1, 0)$) from $H(T)$.\end{defin}
\textbf{Convention}: If $[g] = [h] \in H(T)$, we abuse the terminology and we say that $g,h$ are in the same homothety class in order to distinguish with the "projectively equivalent hyperbolic elements of $T$" which means that they project in the same path in the quotient $G / T$. Note that there is the possibility that two projectively equivalent elements not to be in the same homothety class.\\
\begin{defin}
If $[x] \in H(T)$, we define the length of the class $[x]$, and we denote it by $\ell ([x])$, to be the infimum of the lengths $\ell(g)$, where $g$ varies over all the hyperbolic elements in the homothety class of $[x]$.
\end{defin}
 We will prove that the infimum is realised and it is actually a minimum, but it might be realised on more than one (projectively) equivalences classes of elements.
\begin{prop}\label{realise}
	\begin{enumerate}
\item For each $[x] \in H(T)$ there are finitely many projectively inequivalent elements $h_1,...,h_k$, so that $\ell(x)$ is realised by some $h_i$ for all $\ell \in \Sigma_T$.
\item Moreover, if for the hyperbolic element $g$ the projection of $axis(g))$ in $\Gamma = G / T$ is an embedded loop or a degenerate barbell (i.e. the cases (i), (iv), (v) of the Definition \ref{Candidates}), then  $g$ realises $\ell([g]), [g] \in H(T)$ and for every other $h$ that realises $\ell ([g])$, we have that $pr(axis(g)) = pr(axis(h))$.

	\end{enumerate}
\end{prop}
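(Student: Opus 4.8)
The plan is to translate both statements into a combinatorial minimization problem on the finite quotient graph of groups $\Gamma = G/T$. A homothety class $[x] \in H(T)$ is encoded by a pair $(I,y)$, where $I \subseteq \{1,\ldots,r\}$ records which vertex-group coordinates are nontrivial and $y \in H_1(\Gamma,\mathbb{Z}_2)$ is its homology class. An element $g$ lies in the homothety class exactly when its axis projects to a cyclically reduced loop $\alpha = pr(axis_T(g))$ that passes through each non-free vertex $v_i$ with $i \in I$ (carrying there a nontrivial element of $G_i$) and has $\mathbb{Z}_2$-homology class $y$. Since the vertex groups are elliptic, $\ell(g)$ equals the total $\ell$-length of the edges crossed by $\alpha$, counted with multiplicity; reaching $v_i$ and inserting a nontrivial element of $G_i$ costs nothing in length but constrains the loop to visit $v_i$. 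Thus $\ell([x]) = \inf_\alpha \ell(\alpha)$, where for a fixed combinatorial type of $\alpha$ the quantity $\ell(\alpha)$ is a linear function of $\ell \in \Sigma_T$.

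For part (i) I would first establish a uniform bound on the combinatorial complexity of minimizers: $\ell([x])$ is always realized by a loop crossing each edge of $\Gamma$ at most twice. The argument is a surgery/reduction. A loop realizing $(I,y)$ sweeps a connected subgraph containing the required $v_i$ and carrying $y$; edges lying on the cycles needed for $y$ are crossed once, edges serving only to connect the required vertices (bridges, traversed as $\beta\bar\beta$) are crossed twice, and any edge crossed three or more times can be rerouted, since removing two crossings preserves the $\mathbb{Z}_2$-homology class and the set of visited vertices while not increasing the length. This bounds the combinatorial length independently of $\ell$, so only finitely many projective-equivalence classes of loops compete for the minimum (as in the Remark on finitely many projectively inequivalent elements of bounded length). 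On each such class $\ell(\alpha)$ is linear, so $\ell([x])$ is the lower envelope of finitely many linear functionals on $\Sigma_T$; at every $\ell$ the minimum is attained by one of them, which produces the finite universal list $h_1,\ldots,h_k$.

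For part (ii), suppose $pr(axis_T(g))$ is an embedded loop (case (i)) or a simply/doubly-degenerate barbell (cases (iv), (v)). In each case the loop sweeps a minimal connected subgraph adapted to $(I,y)$: a single embedded cycle carrying $y$ for (i); an embedded cycle together with a geodesic spur reaching the extra non-free vertex for (iv); and a path traversed back and forth between the two required non-free vertices, forcing $y=0$, for (v). I would show that any representative of the homothety class must traverse at least this much, so $g$ realizes $\ell([x])$, and then that a co-realizer $h$ is forced to sweep the same subgraph, giving $pr(axis_T(h)) = pr(axis_T(g))$. The structural point is that these degenerate types, unlike figure-eights (ii) and full barbells (iii), carry no excess cycle that could be traded away: their homology is supported on at most one embedded cycle, and the connecting data is a tree (spur or bridge) which in a loop must be crossed twice.

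The main obstacle is the uniqueness clause of part (ii): excluding a distinct realizer of equal length. This reduces to showing that the minimal subgraph carrying $(I,y)$ is unique for these three types, i.e. that the relevant geodesic cycle or geodesic bridge is the unique minimizer. Here I expect the real work: one must rule out two distinct embedded cycles, or two distinct geodesic bridges between the prescribed non-free vertices, of equal $\ell$-length both representing the class. I would exploit the embeddedness hypothesis together with the rigidity coming from $T$ being a tree—so that the lift of the period to $T$ is a single geodesic segment between chosen lifts of the non-free vertices—to pin down the swept subgraph. By contrast, the finiteness and realization bookkeeping of part (i) is comparatively routine once the multiplicity-two bound is available; the only delicate point there is the surgery step verifying that crossings beyond two can always be removed without altering the homothety class.
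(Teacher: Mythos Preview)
Your approach to both parts is essentially the paper's. For part (i), the paper also argues by surgery: if a realizer's projection $L_h$ crosses an edge $e$ twice in the same direction, writing $L_h = e\,b_1\,e\,b_2$ and replacing it by $b_1\bar b_2$ preserves the $\mathbb{Z}_2$-homology class and the set of visited non-free vertices, so the same homothety class is realized by a strictly shorter loop; similarly if $e$ is crossed twice in opposite directions without separating the image. This yields exactly your multiplicity-at-most-two bound, hence finitely many projective types independent of $\ell$.

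For part (ii) your outline again matches the paper, which for each of the three shapes argues that any $h'$ with $[h']=[h]$ must visit the same non-free vertices and, by the homology constraint, cross at least the same edges. One remark on where you locate the difficulty: the embedded-loop case is immediate and needs no ``geodesic cycle'' argument. An embedded cycle uses each edge of $\Gamma$ at most once, so its $\mathbb{Z}_2$-homology class records exactly its edge set; any $h'$ in the same class must then use each of those edges an odd number of times and every other edge an even number of times, giving $\ell(h')\geq \ell(h)$ with equality forcing the same edge set and hence the same projection. So ``two distinct embedded cycles representing the same class'' cannot occur, and the paper simply says this case is easy. Your caution about the degenerate-barbell cases (iv) and (v) --- where the ``bridge'' $\beta$ is controlled not by homology but by the non-free vertices it must reach --- is better placed: the paper's argument there is brief, asserting that any competitor ``has to cross at least the same edges,'' and you are right that this is the step demanding the most care.
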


\begin{proof}
	\begin{enumerate}

\item

 We claim that if $[x] \in H(T)$ is represented by some hyperbolic element $h$ which realises $\ell([x])$ and $L_h = pr(axis(h))$ crosses the edge $e$ more that once then $L_h$ crosses $e$ exactly twice in opposite directions and $e$ separates the image of $L_h$.\\
In order to see this, we consider two cases. In the first case, suppose  that $ \psi(h) = (h_1, ..., h_r, x_1) \in G_1 \oplus G_{2} \oplus ... \oplus G_r \oplus H_1(\Gamma, \mathbb{Z}_2) $
 and $L_h$ crosses some edge twice in the same direction and without loss we assume that $ L_h = e b_1 e b_2$. But if we "avoid" the edge $e$, the path $b_1 \bar{b}_2$ is homothetic to $L_h$ using $\mathbb{Z}_2$-coefficients. Moreover, we can find a hyperbolic element $h'$ with $\psi(h) \sim \psi(h')$ and $L_{h'} = b_1 \bar{b}_2$ and as a consequence the free parts of $\psi(h) $ and $\psi(h')$ would be homothetic.
 This can be done since the loop $b_1 \bar{b}_2$ contains every $v_i$, where $G_{v_i} = G_i$, for which the corresponding coordinate $h_i$ of $\psi(h)$ is non-trivial. Since $G/T$ is a graph of groups and the elements of its fundamental group have the form $g_0e_1g_1e_2...e_mg_m$, where $e_1,...,e_m$ is a loop based at some point $v_0 \in G_{v_0}$, $g_0 \in G_{v_0}$ and $g_i \in G_{\tau(e_i)}$, we can use $ b_1 \bar{b}_2$ to be our loop and therefore, we can produce a word $h'$ following the loop $b_1 \bar{b}_2$ and only the first time we meet some $G_{v_i}$, as above, we write the letter $h_i$. This will produce a hyperbolic element (since $\psi(h') = \psi(h)$ is not homothetic to $(1,1,...,1,0)$, while every elliptic element is) with "homothetic free part" with $h$ and the rest requested properties.
 As a consequence, we can change $h,L_h$ with the pair $h',L_{h'}$, so that $[h] = [h'] \in H(T)$, $L_{h'}$ contains each non-free vertex $v_i$ for which $h_i \neq 1$, and $L_{h'}$ is strictly shorter than $L_h$. Therefore, $L_h$ cannot contain some edge $e$ twice in same direction.\\
Similarly, for the second case, i.e. if the axis $h$ crosses some edge twice in opposite directions but doesn't separate the image, we can use  the same arguments as in the free case, combining them with the fact that the new path has to contain each non-free vertex $v_i$ for which $h_i \neq 1$, so it is possible to come up with a loop corresponding to a projectively equivalent element with strictly shorter period of its axis.\\
Therefore, if $h$ realises the length of $[h] \in H(T)$, we have that $L_h$ crosses each edge once or it crosses it twice in opposite directions and $e$ separates the image of $L_h$. But there are finitely many such projections of axes of hyperbolic elements and these projections do not depend on the choice of the metric, i.e. there are finitely many projectively inequivalent elements in the homothety class of $[h]$ which realise $\ell ([h]) $.
\item Suppose that $h$ is a hyperbolic element so that $L_h$ is an arc, then it is obvious that for any other element $h'$ s.t. $[h] = [h']$, $L_{h'}$ has to contain the same non-free vertices and actually to cross at least the same edges and therefore if $h'$ realises the length of $[h]$, then $L_h = L_{h'}$.\\
If $h$ is a hyperbolic element so that $L_h$ is an embedded loop, then it is easy to see that for any other $h'$ s.t. $[h] = [h']$, we have that $L_h = L_{h'}$.\\
Finally, if $h$ is a hyperbolic element so that $L_h$ is an embedded loop with an arc attached such that the last vertex of the arc is a non-free vertex, then again we use the fact that for any other $h' $ s.t. $[h] = [h']$, $L_h'$ has to contain the same non-free vertices as $L_h$ and it has to cross all the edges of the arc. As above, it has to cross all the edges of the embedded loop and therefore if $h'$ realises $\ell([h])$ we get that $L_h = L_ {h'}$.
	\end{enumerate}
\end{proof}

The set of linear inequalities $\ell (h_i) \leq \ell (h_j)$ for the set of $h_i$'s in the previous proposition divides the simplex $\Sigma_{T}$ into closed convex subsets $C_1,...,C_k$ s.t. for each $C_i$ there is an $h_i$ with the property $\ell(h_i) \leq \ell(h_j)$ for all $j$. In fact, we define the $C_i$'s by:
\begin{equation*}
\ell \in C_i \Longleftrightarrow  \ell (h_i) \leq \ell (h_j) , \mbox{ for every } j=1,...,k.
\end{equation*}
As a consequence we get the following corollary:
\begin{cor}\label{Closed Convex}
	A simplex $\Sigma_{T}$ is covered by closed convex subsets $C_1,...,C_k$ s.t. for each $x \in H(T)$, there is a hyperbolic element such that $\ell([x]) = \ell (h_j)$ for all $\ell \in C_j$.
\end{cor}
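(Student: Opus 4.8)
The plan is to read the statement off Proposition~\ref{realise}(i) together with a single linearity observation. I would first fix a class $[x] \in H(T)$ and take the finitely many projectively inequivalent hyperbolic elements $h_1, \dots, h_k$ supplied by Proposition~\ref{realise}(i), so that for \emph{every} metric $\ell \in \Sigma_T$ the length $\ell([x])$ is realised by (at least) one of them. The key point is that each function $\ell \mapsto \ell(h_i)$ is \emph{linear} on $\Sigma_T$: since projective equivalence is a purely combinatorial notion, the projection $L_{h_i} = pr(axis_T(h_i))$ is a fixed path in $\Gamma = G/T$ that does not depend on the metric, and therefore
\[
\ell(h_i) = \sum_{e \in E_1(T)} m_i(e)\, \ell(e),
\]
where $m_i(e)$ is the (metric-independent) number of times $L_{h_i}$ crosses the orbit of $e$. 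Thus $\ell \mapsto \ell(h_i)$ is the restriction to $\Sigma_T$ of a linear functional on the space of edge lengths.

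Given this, I would define, exactly as in the paragraph preceding the statement,
\[
C_i = \{\ell \in \Sigma_T : \ell(h_i) \leq \ell(h_j) \text{ for all } j = 1, \dots, k\}.
\]
Each inequality $\ell(h_i) \leq \ell(h_j)$ is linear in $\ell$, so $C_i$ is the intersection of $\Sigma_T$ with finitely many closed half-spaces, hence closed and convex. The sets $C_1, \dots, C_k$ cover $\Sigma_T$: for any $\ell$, Proposition~\ref{realise}(i) gives $\ell([x]) = \min_i \ell(h_i)$, and if this minimum is attained at index $j$ then $\ell \in C_j$. Finally, on $C_j$ we have $\ell(h_j) \leq \ell(h_i)$ for all $i$, so $\ell([x]) = \ell(h_j)$ holds throughout $C_j$, which is exactly the claimed property, witnessed by the hyperbolic element $h_j$.

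If the statement is to be read as a single covering valid \emph{simultaneously} for all $x \in H(T)$, I would finish by passing to a common refinement. Here I would first record that $H(T)$ is finite: a class is determined by the subset of indices $i$ whose $G_i$-coordinate is non-trivial together with an element of $H_1(\Gamma, \mathbb{Z}_2) \cong \mathbb{Z}_2^{n}$, so $|H(T)| \leq 2^{r+n}$. For each of these finitely many classes I obtain a finite convex decomposition as above, and then I intersect them: the non-empty sets of the form $\bigcap_{x \in H(T)} C^{x}_{i_x}$ again form a finite cover of $\Sigma_T$ by closed convex subsets, and on each such cell the minimiser for every fixed $x$ is constant, so a single hyperbolic element realises $\ell([x])$ across the whole cell.

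The substantive content here is already contained in Proposition~\ref{realise}(i), whose proof did the real work of confining the relevant hyperbolic elements to a finite, metric-independent list; the only genuinely new ingredient is the linearity observation, which turns that finite list into a decomposition of $\Sigma_T$ cut out by linear inequalities. Accordingly I do not expect a serious obstacle in this corollary. The single point that requires care is the bookkeeping for the uniform-over-$x$ version, where the finiteness of $H(T)$ is precisely what keeps the common refinement finite.
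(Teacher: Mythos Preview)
Your proposal is correct and follows essentially the same approach as the paper: the paper's argument, given in the paragraph immediately preceding the corollary, defines the $C_i$'s by exactly the linear inequalities $\ell(h_i) \leq \ell(h_j)$ you write down, using the finite list from Proposition~\ref{realise}(i). Your treatment is in fact more complete than the paper's, since you make explicit both the linearity of $\ell \mapsto \ell(h_i)$ and the common-refinement step needed to obtain a single decomposition that works simultaneously for every $[x] \in H(T)$; the paper leaves both of these implicit.
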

Moreover, we can get as a corollary:

\begin{cor}\label{derivative}
Let choose $[x] \in H(T)$.
For every integrable $\tau \in T_{\ell} (\Sigma_{T})$, there is a $j$ s.t. $\ell, \ell +t \tau \in C_j$ (for all small $t>0$) and the derivative from the right at 0 of $t \rightarrow (\ell +t \tau)([x]) $ is $\tau(h_j)$.\\
Moreover, it equals to $\min \{ \tau(h_i) | [h_i] = [x], h_i \mbox{ realises } \ell([x]) \}$.
\end{cor}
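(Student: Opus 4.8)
The plan is to realise the map $\ell \mapsto \ell([x])$ as the minimum of finitely many \emph{linear} functions and then apply the elementary formula for the one-sided derivative of such a minimum.

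First I would record, from Corollary \ref{Closed Convex}, that $\ell([x]) = \min_{1 \le i \le k} \ell(h_i)$ for every $\ell \in \Sigma_T$, where $h_1,\dots,h_k$ are the representatives furnished by Proposition \ref{realise}. Each function $\ell \mapsto \ell(h_i)$ is linear: writing $m_i(e)$ for the number of times $L_{h_i}$ crosses the edge $e$, we have $\ell(h_i)=\sum_e m_i(e)\,\ell(e)$, and for the same reason $(\ell+t\tau)(h_i)=\ell(h_i)+t\,\tau(h_i)$ for all $t$. Since $\tau$ is integrable, $\ell+t\tau\in\Sigma_T$ for all sufficiently small $t\ge 0$, so $t\mapsto(\ell+t\tau)([x])=\min_i\big(\ell(h_i)+t\,\tau(h_i)\big)$ is defined on a right-neighbourhood of $0$, and the representation of Corollary \ref{Closed Convex} applies to each $\ell+t\tau$.

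Next I would isolate the active set $I=\{\,i : \ell(h_i)=\ell([x])\,\}$, which is exactly the set of indices whose $h_i$ realises $\ell([x])$, and choose $j\in I$ with $\tau(h_j)=\min_{i\in I}\tau(h_i)$. The claim is that $\ell,\ell+t\tau\in C_j$ for all small $t>0$. For $\ell$ this is immediate, since $j\in I$ gives $\ell(h_j)=\ell([x])\le\ell(h_i)$ for every $i$. For $\ell+t\tau$ I would split the comparison $(\ell+t\tau)(h_j)\le(\ell+t\tau)(h_i)$ into two cases: when $i\in I$ it reduces to $\tau(h_j)\le\tau(h_i)$, which holds by the choice of $j$; when $i\notin I$ the strict inequality $\ell(h_j)<\ell(h_i)$ persists for small $t$ by continuity. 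Hence $\ell+t\tau$ stays in $C_j$, where the length is linear, $(\ell+t\tau)([x])=\ell(h_j)+t\,\tau(h_j)$; its right derivative at $0$ is therefore $\tau(h_j)$, and by construction $\tau(h_j)=\min\{\tau(h_i): [h_i]=[x],\ h_i\text{ realises }\ell([x])\}$, which is precisely the asserted formula.

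This is essentially the standard computation of the one-sided derivative of a minimum of affine functions, so I do not expect a serious obstacle. The only points needing care are verifying that $\ell+t\tau$ does not leave $C_j$ — handled by treating the active and inactive indices separately and invoking continuity for the latter — and checking that the active set $I$ coincides with the set of representatives realising $\ell([x])$, so that the final minimum is taken over exactly the right elements.
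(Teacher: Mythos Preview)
Your proposal is correct and follows essentially the same route as the paper: pick among the $h_i$ realising $\ell([x])$ the one with smallest $\tau$-value, observe linearity $(\ell+t\tau)(h_i)=\ell(h_i)+t\,\tau(h_i)$, and conclude that this $h_j$ continues to realise the minimum for small $t$, so the right derivative is $\tau(h_j)$. If anything you are slightly more careful than the paper, which only writes out the comparison for the active indices $i\in I$ and leaves the inactive ones implicit; your explicit continuity argument for $i\notin I$ fills that small gap.
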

\begin{proof}
Let $[x] \in H(T)$ and (without loss, after reordering) assume that $h_1, ..., h_m$ are the projectively inequivalent hyperbolic elements in the homothety class of $[x]$ which realise $\ell([x])$, i.e. $\ell([x]) = \ell(h_i), i=1,...,m$. Assuming that $\tau(h_1) \leq \tau(h_i), i=1,...,m $, then we have that, for all sufficiently small $t>0$,\begin{equation*}
(\ell +t \tau)(h_1) = \ell(h_1) +t\tau(h_1) \leq \ell(h_i) +t \tau(h_i)  = (\ell +t \tau) (h_i), i = 1,...,m.
\end{equation*}
Therefore $(\ell +t \tau) (h_1)$ realises $(\ell +t \tau) ([x])$. As a consequence, using the previous corollary, $\ell, \ell +t \tau \in C_1$. In this case, it is straightforward that the derivative is exactly $\tau(h_1)$, since actually $(\ell +t \tau)([x]) = (\ell +t \tau)(a_1)$ for all small $t>0$. 
\end{proof}
Now we have to distinguish two cases. The case that free group has rank more than or equal to 2 and the other case. If $n=0$ or $n=1$, then the original arguments don't work, however it turns out that it is actually easier to define the function $\Psi$ without using double covers.

\subsection{$n= 0$ or $n=1$}
In this case, we have that any candidate is an embedded loop or a degenerate barbell. Therefore we are in position to define directly the function $\Psi$ of the main theorem, as there is no need to consider double covers.
\begin{defin}
  	Fixing some $\ell \in \Sigma_{T}$ and $\tau \in T_{\ell} (\Sigma_{T}) $, we define the number:
	\begin{equation}\label{2}
	N(\ell, \tau) = - \sum\limits_{[\alpha] \in H(T)} \frac{\min \tau ([\alpha])}{\ell ([\alpha])}
	\end{equation}
	where minimum is taken over the projectively inequivalent hyperbolic elements $g$ in the homothety class of $[\alpha]$ which realise  $\ell ([\alpha])$.
\end{defin}
We define a new function, which we will prove that it is a norm, by:
\begin{equation}
\| (\ell, \tau) \| ^{N} = \| (\ell, \tau) \| ^{L} + \frac{1}{K+1}N(\ell, \tau)
\end{equation}
where $K$ is the number of summands in \ref{2} (and it depends on $n,r$).
\\
We write $\| \tau \|^{.}$ instead of $\| (\ell, \tau) \| ^{.}$, for simplicity.\\

Define the map $\Psi : \Sigma_{T} \rightarrow \mathbb{R}$ by
\begin{equation}
\Psi(\ell) = - \frac{1}{K+1}  \sum\limits_{[\alpha] \in H(T)} \log \ell ([\alpha])
\end{equation}
 Note that $\Psi$ is smooth on each convex set $C_j$ of the Corollary \ref{Closed Convex}. 
\subsection{Generic Case}
Here we have to consider all the non-trivial "double covers" of $T$ with the same non-free vertices with full stabilisers, $T_i \rightarrow T, i =1,2,...,2^n - 1$. These double covers have quotient with fundamental groups which correspond to the kernels of homomorphisms from $G$ to $\mathbb{Z}_2$, sending every element of each $G_i$ to $0$. We have that $T_i$ is the same metric tree with $T$. Therefore we get that the lifts of $\ell$ and $\tau$ to each $T_i$, and we denote them by $\ell_i$ and $\tau_i$, respectively. Similarly, we can define the space of metrics $\Sigma _{T_i}$ and the corresponding tangent space $T_{\ell_i} (\Sigma _{T_i})$.
A very important lemma which is the reason that we consider the double covers is the following:
\begin{lemma}\label{lifts}
If $\alpha$ is a candidate in $T$, then there exists a double cover $T _i \rightarrow T$, and a lift $\tilde{\alpha}$ of $\alpha$ in $T_i$, so that $\tilde{a}$ is the unique (projective class) element that realises the length of $[ \tilde{\alpha} ]$.
\end{lemma}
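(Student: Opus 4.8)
\section*{Proof proposal}

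The plan is to reduce everything to Proposition \ref{realise}(ii). That proposition already guarantees that an embedded loop or a degenerate barbell uniquely realizes (up to projective equivalence) the length of its homothety class; this holds in $T$ and, verbatim, in each double cover $T_i$, since $T_i$ is itself a point in the relative outer space of $K_i := \ker\phi_i$. So the lemma will follow once I produce, for each candidate $\alpha$, a nontrivial homomorphism $\phi : G \to \mathbb{Z}_2$ vanishing on every $G_j$ --- equivalently an element of $\mathrm{Hom}(H_1(\Gamma,\mathbb{Z}_2),\mathbb{Z}_2)$ --- whose associated double cover $\Gamma_i = K_i\backslash T$ contains a lift $\tilde\alpha$ of $\alpha$ that projects to an embedded loop or a degenerate barbell. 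The sheet-tracking rule I use throughout is the standard one: a path $p$ lifts starting in a chosen sheet and ends in the sheet shifted by $\phi(p) \in \mathbb{Z}_2$, so that an embedded loop or path lifts to a single doubled loop when $\phi = 1$ on it and to two disjoint copies when $\phi = 0$.

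Second, I dispose of the three cases that are already good in $T$, namely the embedded loop and the two degenerate barbells (types (i), (iv), (v) of Definition \ref{Candidates}). For each I choose $\phi$ to vanish on the homology class of the embedded loop $\alpha_1$ occurring in $\alpha$ (for type (v) there is no loop, and any $\phi$ works), and to be nontrivial; this is possible because in the generic case $n = \dim_{\mathbb{Z}_2} H_1(\Gamma,\mathbb{Z}_2) \geq 2$, so the annihilator of a single class is still nonzero. With $\phi(\alpha_1) = 0$ the loop lifts to a genuine embedded loop rather than a doubled one, and since $\phi$ kills each $G_j$ the non-free endpoints of $\beta$ lift to non-free vertices with full stabilizer; hence $\tilde\alpha$ projects to an embedded loop, a simply-degenerate barbell, or a doubly-degenerate barbell respectively, and Proposition \ref{realise}(ii) closes these cases.

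Third --- and this is the heart of the argument --- I treat the bouquet of two circles and the barbell (types (ii), (iii)), built from two embedded circles $\alpha_1, \alpha_2$. The key observation is that $\alpha_1$ and $\alpha_2$ are edge-disjoint (they meet in a single point, or not at all), so $\alpha_1 \cup \alpha_2$ is a nonempty even subgraph and hence $[\alpha_1] + [\alpha_2] = [\alpha_1 \cup \alpha_2] \neq 0$ in $H_1(\Gamma,\mathbb{Z}_2)$; since an embedded circle is always a nonzero cycle, $[\alpha_1]$ and $[\alpha_2]$ are distinct nonzero vectors over $\mathbb{Z}_2$, hence linearly independent. I can therefore pick $\phi$ with $\phi(\alpha_1) = \phi(\alpha_2) = 1$, which is automatically nontrivial and vanishes on the $G_j$. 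Tracing the lift of $\alpha$ sheet-by-sheet, each circle lifts to an \emph{arc} joining the two sheets over its basepoint, and in the barbell case the two traversals of the bar $\beta$ use the two \emph{distinct} disjoint lifts of $\beta$; consequently $\tilde\alpha$ neither repeats an edge nor backtracks, and its corner vertices are distinct, so $\tilde\alpha$ is a simple cycle, i.e.\ an embedded loop in $\Gamma_i$. Proposition \ref{realise}(ii) again yields uniqueness.

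I expect the main obstacle to be the clean verification in the third step that the lift is genuinely embedded: one must track the $\mathbb{Z}_2$-sheet through all four sub-paths of the barbell $\alpha_1 \beta \alpha_2 \bar\beta$ and confirm that the loop closes up only after using each of $\alpha_1, \alpha_2$ once and each of the two lifts of $\beta$ once, with no vertex repeated except at the designated junctions. The homological input (independence of $[\alpha_1], [\alpha_2]$) is what guarantees a suitable $\phi$ exists at all, and pinning down the edge-disjointness and vertex-distinctness that make $\tilde\alpha$ simple is the only place where the combinatorics of the double cover really has to be worked out.
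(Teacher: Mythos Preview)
Your proposal is correct and follows essentially the same approach as the paper's proof: both reduce to Proposition~\ref{realise}(ii) by choosing, case-by-case, a homomorphism $\phi:H_1(\Gamma,\mathbb{Z}_2)\to\mathbb{Z}_2$ so that the lift of the candidate becomes an embedded loop or degenerate barbell (the paper phrases cases (ii)--(iii) as ``$A_1,A_2$ do not lift but $L$ lifts,'' which is exactly your condition $\phi(\alpha_1)=\phi(\alpha_2)=1$). Your write-up in fact supplies details the paper leaves implicit---the linear-independence argument guaranteeing such a $\phi$ exists, and the sheet-by-sheet verification that the barbell lift is genuinely simple---so it is a strict elaboration rather than a different route.
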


\begin{proof}
We will apply Proposition (ii)  \ref{realise} on the appropriate $T_i$.\\
For the case (v) of \ref{realise} i.e. the doubly degenerate barbell, every $T_i$ works.\\ 
For the cases (i), (iv)  of the Definition \ref{Candidates} i.e. the embedded loop and of the simply degenerate barbell, it is enough to suppose that the embedded loop lifts to an embedded loop. Since we can always find such a double cover, the lemma follows.\\
For the cases (ii), (iii) of \ref{realise} i.e. the figure eight and the non-degenerate barbell $L$ with embedded loops $A_1, A_2$, we just have to find a double cover of $T$ on which $A_1, A_2$ does not lift but $L$ lifts. Again, there is always such a double cover.
\end{proof}

\begin{defin}
	Fixing some $\ell \in \Sigma_{T}$ and $\tau \in T_{\ell} (\Sigma_{T}) $, we define the number

\begin{equation}\label{1}
N(\ell, \tau) = - \sum\limits_{T_i} \sum\limits_{[\alpha] \in H(T_i)} \frac{\min \tau_i ([\alpha])}{\ell_i ([\alpha])}
\end{equation}
	where minimum is taken over the projectively inequivalent hyperbolic elements $g$ in the  class of $[\alpha]$ which realise  $\ell_i ([\alpha])$.
\end{defin}
We are now in position to define the new norm by :
\begin{equation}
\| (\ell, \tau) \| ^{N} = \| (\ell, \tau) \| ^{L} + \frac{1}{K+1}N(\ell, \tau)
\end{equation}
where $K$ is the number of summands in \ref{1} (and it depends on  $r,n$).
\\
We write $\| \tau \|^{.}$ instead of $\| (\ell, \tau) \| ^{.}$, for simplicity.\\

Define the map $\Psi : \Sigma_{T} \rightarrow \mathbb{R}$ by
\begin{equation}
\Psi(\ell) = - \frac{1}{K+1} \sum\limits_{T_i} \sum\limits_{[\alpha] \in H(T_i)} log \ell_i ([\alpha])
\end{equation}
where $\ell _i$ is the lift of $\ell$ to $T_i$. Again, note that $\Psi$ is smooth on each convex set $C_j$ of the Corollary \ref{Closed Convex}.

\subsection{Continue of the proof}
For both cases it is true that:
\begin{lemma}
\begin{equation*}
\frac{1}{K+1} max \{ \| \tau \|^{L},  \| - \tau \|^{L} \} \leq \| \tau \| ^{N} \leq 2 \| \tau \|^{L} + \| - \tau \|^{L}
\end{equation*}
\end{lemma}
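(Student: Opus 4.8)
The plan is to reduce the whole statement to a uniform two-sided bound on each individual summand of $N(\ell,\tau)$ and then add them up. First I would record the elementary consequence of the definition of $\|\cdot\|^L$: since $\|(\ell,\tau)\|^L=\sup_g \tau(g)/\ell(g)$ and $\|(\ell,-\tau)\|^L=\sup_g -\tau(g)/\ell(g)=-\inf_g \tau(g)/\ell(g)$, every hyperbolic $g\in Hyp(\mathcal{O})$ satisfies $-\|-\tau\|^L \le \tau(g)/\ell(g)\le \|\tau\|^L$. Now each summand of $N$ has the shape $-\min\tau_i([\alpha])/\ell_i([\alpha])$, where the minimum is attained by some hyperbolic element $g_0$ realizing $\ell_i([\alpha])$; thus $\ell_i(g_0)=\ell_i([\alpha])$ and the summand equals $-\tau_i(g_0)/\ell_i(g_0)$. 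Because the covering $T_i\to T$ preserves edge lengths and the pulled-back data, and because the ratio $\tau/\ell$ is unchanged under passing to covers and to powers, this equals $\tau(g_0)/\ell(g_0)$ for a genuine hyperbolic element of $\mathcal{O}$. Hence every summand of $N$ lies in the interval $[-\|\tau\|^L,\ \|-\tau\|^L]$.

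For the upper bound I would simply sum the $K$ summands, each at most $\|-\tau\|^L$, to get $N(\ell,\tau)\le K\|-\tau\|^L$; then $\|\tau\|^N=\|\tau\|^L+\tfrac{1}{K+1}N\le \|\tau\|^L+\tfrac{K}{K+1}\|-\tau\|^L\le \|\tau\|^L+\|-\tau\|^L\le 2\|\tau\|^L+\|-\tau\|^L$, the last two steps using $\|\tau\|^L\ge 0$ (item (iv) of the norm proposition). For the first half of the lower bound, the same uniform estimate gives $N\ge -K\|\tau\|^L$, so $\|\tau\|^N\ge \|\tau\|^L-\tfrac{K}{K+1}\|\tau\|^L=\tfrac{1}{K+1}\|\tau\|^L$.

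The crux is the remaining inequality $\|\tau\|^N\ge \tfrac{1}{K+1}\|-\tau\|^L$, and this is where the double-cover apparatus earns its keep. Let $\alpha^*$ be a candidate on which $\|-\tau\|^L$ is attained, so $-\tau(\alpha^*)/\ell(\alpha^*)=\|-\tau\|^L$ (item (ii) of the norm proposition applied to $-\tau$). By Lemma \ref{lifts} there is a double cover $T_i$ and a lift $\tilde\alpha^*$ that is the unique projective class realizing $\ell_i([\tilde\alpha^*])$; consequently the minimum defining the corresponding summand of $N$ is taken at $\tilde\alpha^*$ alone, and since the $\tau/\ell$ ratio is preserved by the cover, that summand equals $-\tau_i(\tilde\alpha^*)/\ell_i(\tilde\alpha^*)=-\tau(\alpha^*)/\ell(\alpha^*)=\|-\tau\|^L$. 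Bounding the remaining $K-1$ summands below by $-\|\tau\|^L$ yields $N\ge \|-\tau\|^L-(K-1)\|\tau\|^L$, whence $\|\tau\|^N\ge \tfrac{2}{K+1}\|\tau\|^L+\tfrac{1}{K+1}\|-\tau\|^L\ge \tfrac{1}{K+1}\|-\tau\|^L$. Combining with the previous estimate gives $\|\tau\|^N\ge \tfrac{1}{K+1}\max\{\|\tau\|^L,\|-\tau\|^L\}$.

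I expect the main obstacle to be exactly the verification underlying that last paragraph: that the candidate achieving $\|-\tau\|^L$ reappears as a \emph{full-strength} summand of $N$ equal to $\|-\tau\|^L$, rather than being diluted by the minimum over a larger projective class. This is precisely what Lemma \ref{lifts} is designed to guarantee — that after passing to a suitable double cover the relevant candidate becomes the unique length-realizer of its class — so the argument hinges on invoking that lemma correctly and on checking that the length and tangent data ascend/descend compatibly along the covering, so that the $\tau/\ell$ ratio is genuinely unchanged. In the easier regime $n\le 1$ no covers are needed, since by Proposition \ref{realise}(ii) every relevant candidate (an embedded loop or a degenerate barbell) already uniquely realizes the length of its class in $H(T)$, and the same three estimates go through verbatim with $N$ read off directly on $T$.
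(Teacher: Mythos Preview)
Your argument is correct and follows essentially the same route as the paper: bound each summand of $N(\ell,\tau)$ between $-\|\tau\|^L$ and $\|-\tau\|^L$ using the defining supremum, and for the delicate inequality $\|\tau\|^N\ge\tfrac{1}{K+1}\|-\tau\|^L$ invoke Lemma~\ref{lifts} (respectively Proposition~\ref{realise}(ii) when $n\le 1$) to exhibit one summand exactly equal to $\|-\tau\|^L$. One cosmetic slip: where you write ``this equals $\tau(g_0)/\ell(g_0)$'' you mean that the ratio $\tau_i(g_0)/\ell_i(g_0)$ equals $\tau(g_0)/\ell(g_0)$, so the summand is $-\tau(g_0)/\ell(g_0)$; your subsequent interval $[-\|\tau\|^L,\ \|-\tau\|^L]$ and all later estimates are unaffected.
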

\begin{proof}
We will prove this for the generic  and the other case at the same time and we will refer to the corresponding definitions and propositions. Everything is true for both cases.\\
Firstly, we choose some candidates $\alpha$, $\beta$ which realise $\| \tau \|^{L}$, $\| - \tau \|^{L}$, respectively. Then we have that by \ref{Basic Prop} for any hyperbolic element $g$ in $\mathcal{O}$, $\frac{\tau(g)}{\ell(g)} \leq \frac{\tau(\alpha)}{\ell(\alpha)} = \| \tau \|^{L}$, $\|  $ and $\frac{-\tau(g)}{\ell(g)} \leq \frac{-\tau(\beta)}{\ell(\beta)} = \|  - \tau \|^{L}$. But since the minimum in \ref{1} (or \ref{2}), varies over the (projectively inequivalent) hyperbolic elements $g$ that realise $\ell(\alpha)$, we have that the for each $g$ in the sum we get: $\frac{\tau(g)}{\ell(\alpha)} \leq \frac{\tau(\alpha)}{\ell(\alpha)} = \| \tau \|^{L} $ and similarly $\frac{-\tau(g)}{\ell(\alpha)} \leq \frac{-\tau(\beta)}{\ell(\beta)} = \| - \tau \|^{L}$. Therefore we have that the positive summands in $N(\ell, \tau)$ are dominated by $\| - \tau \|^{L}$ and similarly the absolute value of negative summands are dominated by $\|  \tau \|^{L}$ and the right hand side follows.\\
Now the inequality $\frac{1}{K+1} \| \tau \|^{L} \leq \| \tau \| ^{N} $ is equivalent to $-N(\ell, \tau) \leq K \| \tau \|^{L}$, which is true using again the same argument as above.\\
Also, we have to prove that  $\frac{1}{K+1}  \| - \tau \|^{L} \leq \| \tau \|^{N}$, which is equivalent to the inequality 
\begin{equation*}
\| - \tau \|^{L} - N(\ell, \tau) \leq (K+1) \| \tau \|^{N}
\end{equation*}
Let $\alpha$ be a candidate that realises $\| -\tau \|^{L} $, then using the proposition \ref{lifts} (or \ref{realise}), we get that there is a term in $-N(\ell, \tau)$ of the form $\frac{\tau(\alpha)}{\ell(\alpha)}$ which is cancelled out with $\| -\tau \|^{L} = \frac{-\tau(\alpha)}{\ell(\alpha)}$. Finally, we apply again that each positive term of $-N(\ell,\tau)$ is dominated by $\| \tau \|^{L}$ and the left hand side inequality follows.
\end{proof}

Therefore $\| \cdot \|^{N}$ is a (non-symmetric) norm, just like $\| \cdot \|^{L}$ (positivity follows from the previous lemma, while subadditivity and multiplicativity with positive scalars are evident from the definition and the properties of $\| \cdot \|^{L}$).
As an immediate corollary, we can get a nice relation between $\| \tau \| ^{N}$ and $\| - \tau \| ^{N}$, which implies that the new norm is quasi-symmetric.
\begin{cor}\label{reverse}
There is a constant $A = 3(K +1 )$ so that
\begin{equation*}
\| \tau \|^{N} \leq A \| - \tau \|^{N}
\end{equation*}
\end{cor}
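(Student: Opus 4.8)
The plan is to derive this directly from the two-sided bound in the preceding lemma, which already relates $\|\cdot\|^N$ to $\|\cdot\|^L$ and $\|-\cdot\|^L$, by applying that lemma once to $\tau$ and once to $-\tau$ and then combining. The only work is bookkeeping with the constants; there is no genuine geometric obstacle left, since the lemma has already done the hard part of controlling the summands of $N(\ell,\tau)$ by $\|\tau\|^L$ and $\|-\tau\|^L$.

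First I would invoke the right-hand inequality of the lemma for the vector $\tau$, giving
\begin{equation*}
\|\tau\|^N \leq 2\|\tau\|^L + \|-\tau\|^L.
\end{equation*}
This reduces the problem to bounding the two quantities $\|\tau\|^L$ and $\|-\tau\|^L$ by a multiple of $\|-\tau\|^N$.

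Next I would apply the left-hand inequality of the lemma to the vector $-\tau$ in place of $\tau$. Since replacing $\tau$ by $-\tau$ turns the pair $\{\|\tau\|^L, \|-\tau\|^L\}$ into $\{\|-\tau\|^L, \|\tau\|^L\}$, which is the same set, this yields
\begin{equation*}
\frac{1}{K+1}\max\{\|\tau\|^L, \|-\tau\|^L\} \leq \|-\tau\|^N,
\end{equation*}
and hence both $\|\tau\|^L \leq (K+1)\|-\tau\|^N$ and $\|-\tau\|^L \leq (K+1)\|-\tau\|^N$.

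Finally I would substitute these two estimates into the first inequality:
\begin{equation*}
\|\tau\|^N \leq 2\|\tau\|^L + \|-\tau\|^L \leq 2(K+1)\|-\tau\|^N + (K+1)\|-\tau\|^N = 3(K+1)\|-\tau\|^N,
\end{equation*}
which gives the claimed constant $A = 3(K+1)$. The step requiring the most care is the symmetry observation in the second paragraph, namely that the $\max$ appearing in the lemma is invariant under $\tau \mapsto -\tau$, so that applying the lemma to $-\tau$ controls $\|\tau\|^L$ as well as $\|-\tau\|^L$; once this is noted, the remaining computation is immediate.
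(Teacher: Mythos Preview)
Your proof is correct and is exactly the intended derivation: the paper states this as an immediate corollary of the preceding two-sided lemma without further argument, and your bookkeeping with the constants is precisely how one extracts $A = 3(K+1)$ from those inequalities.
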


\begin{prop}\label{Relation}
If $\ell \in \Sigma_{T}$ and $ \tau \in T_{\ell} (\Sigma_{T})$ is integrable then
\begin{equation*}
\|  \tau \| ^{N} = \|  \tau \| ^{L} + d_{\tau} \Psi
\end{equation*}
where the third term is the derivative of $\Psi$ in the direction of $\tau$, i.e. the derivative from the right at $0$ of $t \rightarrow \Psi(\ell + t \tau)$.
\end{prop}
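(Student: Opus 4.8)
The plan is to show that the correction term $\frac{1}{K+1}N(\ell,\tau)$ in the definition of $\|\cdot\|^{N}$ is exactly the directional derivative $d_\tau\Psi$, so that the asserted identity follows at once by subtracting $\|\tau\|^{L}$ from both sides. Thus the whole statement reduces to the single computation
\[
d_\tau \Psi = \frac{1}{K+1} N(\ell,\tau).
\]

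First I would record that lifting a metric to a double cover is linear, so that $(\ell + t\tau)_i = \ell_i + t\tau_i$ for every $T_i$; substituting into the definition of $\Psi$ gives
\[
\Psi(\ell + t\tau) = -\frac{1}{K+1}\sum_{T_i}\sum_{[\alpha]\in H(T_i)} \log\big((\ell_i + t\tau_i)([\alpha])\big).
\]
Since the sum is finite, I can differentiate from the right at $t=0$ term by term. I would first check that integrability passes to the covers: an edge $\tilde e$ of $T_i$ lying over $e$ satisfies $\ell_i(\tilde e) = \ell(e)$ and $\tau_i(\tilde e) = \tau(e)$, so $\tau_i(\tilde e) < 0$ forces $\ell_i(\tilde e) > 0$, i.e. $\tau_i$ is integrable relative to $\ell_i$, which is what lets me invoke the corollaries on each $T_i$.

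The heart of the computation is the inner derivative. By Corollary \ref{derivative} applied on each $T_i$, for small $t>0$ the pair $\ell_i, \ell_i + t\tau_i$ lies in a single convex set $C_j$ of Corollary \ref{Closed Convex}, and the right derivative at $0$ of $t \mapsto (\ell_i + t\tau_i)([\alpha])$ equals $\min\tau_i([\alpha])$, the minimum being over the projectively inequivalent hyperbolic elements realising $\ell_i([\alpha])$. Because $\ell$ is a metric relative to $\mathcal{O}$, the length $\ell_i([\alpha])$ is a positive minimum (Proposition \ref{realise}), so the chain rule applies to the logarithm and yields the right derivative $\frac{\min\tau_i([\alpha])}{\ell_i([\alpha])}$ of each summand. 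Summing over $[\alpha]$ and $T_i$ and reinstating the sign together with the factor $\frac{1}{K+1}$ reproduces exactly $\frac{1}{K+1}N(\ell,\tau)$, which is the identity sought; the argument in the case $n=0$ or $n=1$ is verbatim the same, with the single sum over $H(T)$ replacing the double sum.

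I expect the only delicate point to be the legitimacy of the term-by-term right differentiation, that is, that $\Psi$ is right-differentiable in the direction $\tau$ with the formally computed derivative. This is precisely what Corollary \ref{derivative} controls: remaining inside a fixed $C_j$ for all small $t$ guarantees that $(\ell_i + t\tau_i)([\alpha])$ is linear in $t$ near $0$ (it equals $(\ell_i + t\tau_i)(h_j)$ for the chosen representative $h_j$), so there is no non-smoothness at $t=0$ and the chain rule is valid on the relevant half-interval. The remaining verifications, namely linearity of the lift and the transfer of integrability, are routine.
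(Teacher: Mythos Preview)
Your proposal is correct and follows essentially the same approach as the paper: reduce to proving $d_\tau\Psi = \frac{1}{K+1}N(\ell,\tau)$, invoke Corollary~\ref{derivative} on each $T_i$ to compute the right derivative of $t\mapsto(\ell_i+t\tau_i)([\alpha])$, and then apply the chain rule to the logarithm and sum. Your write-up is in fact more careful than the paper's (you explicitly check that integrability passes to the covers and justify the term-by-term differentiation), but the argument is the same.
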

\begin{proof}
We prove it for the generic case. The proof for the other case is the same, but we just use the tree $T$ and we don't need to consider the covers.\\
We just need to prove that $d_{\tau} \Psi = \frac{1}{K+1}N(\ell, \tau)$. We use the chain rule and the Corollary \ref{derivative}.\\
Applying the Corollary \ref{derivative} on $T_i, \ell_i$ and $\tau_i$, we get that for any $[\alpha] \in H(T_i)$:
\begin{equation*}
d_{\tau _i} \ell_i ([\alpha]) = \tau_i (\alpha_i)
\end{equation*} 
where $a_i$ is the hyperbolic element that realises $\ell_i (\alpha)$ and on which $\tau_i$ is minimal. Therefore using the chain rule:

\begin{equation*}
d_{\tau_i} log \ell_i ([\alpha]) = \frac{\tau_i (\alpha _i)}{\ell_i (\alpha _i)} 
\end{equation*}
Therefore the resulting equations follows immediately, if we take the double sum.
\end{proof}

As in the free case, it is not difficult to see that we can extend the discussion to the whole Outer space $\mathcal{O}$. Firstly, we note that $\| \cdot \|^{L}, \| \cdot \|^{N} $ and $\Psi$ commute with the inclusions of simplices corresponding to collapsing forests without non-free vertices (since for the edges $e$ in that forest we have that $\ell (e) = \tau(e) = 0$).\\
Moreover, there is a permutation between the sets $H(R_{r,q})$ and $H(T)$ by construction and so we can identify  their homothety classes. Similarly, in the case with $n \geq 2$, we can identify the double covers of $T$ (with full stabilisers of vertices) with the double covers of $R_{r,n}$ (with full stabilisers of vertices), and the isomophism between their fundamental groups lifts to isomophisms between the fundamental groups  of their double covers with full stabiliers.\\
Therefore we can define $\Psi$ globally. Moreover, let  $\phi \in Out(G, \mathcal{O})$ be an automorphism and $T \in \mathcal{O}$ be an element of the outer space, then $\phi(T)$ is again an element with the same underlying tree as $T$ with the same metric, but with a different $G$-action. But the change of the action, only induces a permutation of the summands in the definition of $\Psi$ (for both cases).  As a consequence, $\Psi$ is $Out(G, \mathcal{O})$-invariant.

\section{Lenghts of Paths}
In the following sections all the ideas and the proofs are essentially the same as in  \cite{Algom Kfir}, however we include the most of the proofs for the convenience of the reader and for completeness.\\ 
Let $\gamma : [0,1] \rightarrow \mathcal{O}$ be a piecewise linear path. This means that $\gamma$ can be subdivided into finitely many subpaths so that each one is contained in some $C_j$ as in Corollary \ref{Closed Convex} on which $\Psi$ is smooth.\\
On the other hand, the Lipschitz length of $\gamma$ is
\begin{equation*}
len_{L} (\gamma) = \sup  \Big \{  \sum_{i=1}^{p} d(\gamma(t_{i-1}), \gamma(t_i))  : 0= t_0 < t_1 < ... < t_p = 1 \}
\end{equation*}

Suppose that $\Delta t_i= t_i - t_{i-1} $  is small. Since $\gamma (t_i)$ is $\ell$ and $\gamma '(t_i)$ is the vector of the tangent space $\tau$, we get:
\begin{equation*}
d(\gamma(t_{i-1}, \gamma(t_i)) = \frac{d(\gamma(t_{i-1}), \gamma(t_{i-1} + \Delta t_i) )}{\Delta t_i} \Delta t_i \sim \| ( \gamma(t_{i-1}), \gamma ' (t_{i-1}) ) \| ^{L} \Delta t_i
\end{equation*}

Thus
\begin{equation*}
len_{L} (\gamma) = \int_{0}^{1}  \| ( \gamma(t), \gamma ' (t) ) \| ^{L} dt
\end{equation*}

Similarly, we can also define  the length corresponding to the new norm.
\begin{equation*}
len_{N} (\gamma) = \int_{0}^{1}  \| ( \gamma(t), \gamma ' (t) ) \| ^{N} dt
\end{equation*}

\begin{prop}\label{len1}
Let $T,S \in \mathcal{O}$ and $\gamma : [0,1] \rightarrow \mathcal{O}$ be a path from $T$ to $S$ in $\mathcal{O}$.\\
Then $len_{N} (\gamma)  =  len_{L} (\gamma) + \Psi(S) - \Psi(T)$.
\end{prop}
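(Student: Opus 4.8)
The plan is to establish the identity pointwise along $\gamma$ using Proposition \ref{Relation}, and then integrate, so that the integral of the directional derivative of $\Psi$ collapses to the boundary values $\Psi(S) - \Psi(T)$ by the fundamental theorem of calculus. Concretely, I would first subdivide $[0,1]$ by $0 = t_0 < t_1 < \cdots < t_p = 1$ so that each restriction $\gamma|_{[t_{i-1}, t_i]}$ is linear and its image lies in a single convex set $C_j$ of Corollary \ref{Closed Convex}, on which $\Psi$ is smooth; this is possible precisely because $\gamma$ is piecewise linear and each simplex is covered by finitely many such $C_j$.

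On a fixed subinterval, writing $\gamma(t) = \ell$ and $\gamma'(t) = \tau$, I would first check that the forward tangent $\tau$ is integrable relative to $\ell$. This holds because $\gamma$ remains in $\mathcal{O}$: if some edge $e$ has $\ell(e) = 0$, then for the metric $\gamma(t + s) = \ell + s\tau + o(s)$ to stay non-negative for small $s > 0$ one must have $\tau(e) \geq 0$, which is exactly integrability. Hence Proposition \ref{Relation} applies and gives, at every interior point of the subinterval,
\begin{equation*}
\| (\gamma(t), \gamma'(t)) \|^{N} = \| (\gamma(t), \gamma'(t)) \|^{L} + d_{\gamma'(t)} \Psi .
\end{equation*}
Since the piece lies in a single $C_j$ where $\Psi$ is smooth, the one-sided directional derivative $d_{\gamma'(t)} \Psi$ appearing here coincides with the ordinary two-sided directional derivative, and because the parametrization is linear it equals $\frac{d}{dt} \Psi(\gamma(t))$.

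Integrating the displayed identity over $[t_{i-1}, t_i]$ and applying the fundamental theorem of calculus to the smooth function $t \mapsto \Psi(\gamma(t))$ yields
\begin{equation*}
\int_{t_{i-1}}^{t_i} \| (\gamma(t), \gamma'(t)) \|^{N}\, dt = \int_{t_{i-1}}^{t_i} \| (\gamma(t), \gamma'(t)) \|^{L}\, dt + \Psi(\gamma(t_i)) - \Psi(\gamma(t_{i-1})) .
\end{equation*}
Summing over $i = 1, \dots, p$ and using the continuity of $\Psi$, the $\Psi$-terms telescope to $\Psi(\gamma(1)) - \Psi(\gamma(0)) = \Psi(S) - \Psi(T)$, while the remaining integrals recombine into $len_{L}(\gamma)$ and $len_{N}(\gamma)$, giving the claim.

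The step I expect to require the most care is the identification of the directional derivative $d_{\gamma'(t)} \Psi$ from Proposition \ref{Relation}, which is a priori only a right derivative along the ray $\ell + t\tau$, with the honest $t$-derivative $\frac{d}{dt}\Psi(\gamma(t))$ needed to invoke the fundamental theorem of calculus. This is handled by the subdivision ensuring each piece stays inside one smooth region $C_j$ (and by Corollary \ref{derivative}, which pins down the derivative as $\tau(h_j)$ there); the breakpoints $t_i$, where $\gamma$ passes between distinct $C_j$'s and $\Psi$ is merely continuous rather than differentiable, form a finite set of measure zero and are absorbed by the telescoping, so they affect neither integral.
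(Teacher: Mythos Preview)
Your proposal is correct and follows essentially the same approach as the paper: apply Proposition \ref{Relation} pointwise along $\gamma$, integrate, and use the fundamental theorem of calculus so that $\int_0^1 d_{\gamma'(t)}\Psi\,dt = \Psi(S)-\Psi(T)$. The paper's own proof is much terser (it simply invokes piecewise differentiability of $\Psi\circ\gamma$ and Proposition \ref{Relation}), whereas you spell out the subdivision into the convex pieces $C_j$, the integrability check for $\gamma'(t)$, and the identification of the one-sided directional derivative with the honest $t$-derivative---all of which the paper leaves implicit.
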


\begin{proof}
We can use the Fundamental Theorem of Calculus to $\Psi \circ \gamma$, since $\Psi$ and $\gamma$ are piecewise differentiable. In order to simplify the notation, we write $\|  \gamma ' (t)  \| ^{\cdot}$ instead of $\| ( \gamma(t), \gamma ' (t) ) \| ^{\cdot}$  \\
Therefore:
\begin{equation*}
len_{N} (\gamma) = \int_{0}^{1}  \|  \gamma ' (t)  \| ^{N} dt 
\end{equation*}
On the other hand, combining it also with \ref{Relation}, we get that:
\begin{equation*}
\int_{0}^{1}  \|  \gamma ' (t)  \| ^{N} dt  =   \int_{0}^{1} [ \|  \gamma ' (t)  \| ^{L} + d_{\gamma' (t)} \Psi ]dt = len_{L} (\gamma) + \Psi(S) - \Psi(T)
\end{equation*}

\end{proof}

\begin{prop}\label{len2}
Let $T,S \in \mathcal{O}$ and $\gamma : [0,1] \rightarrow \mathcal{O}$ be a path from $T$ to $S$ in $\mathcal{O}$. Let $-\gamma : [0,1] \rightarrow \mathcal{O}$ be the reverse path $-\gamma (t) = \gamma(1-t)$. Then
\begin{equation*}
len_{N}(-\gamma) \leq A  len _{N}(\gamma)
\end{equation*} 
where $A$ is the constant from Corollary \ref{reverse}.
\end{prop}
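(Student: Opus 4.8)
The plan is to reduce the inequality to the pointwise (fibrewise) comparison already established in Corollary \ref{reverse}, after rewriting $len_N(-\gamma)$ as an integral over the original parameter. First I would compute the integrand of the reverse path. Since $-\gamma(t) = \gamma(1-t)$, the chain rule gives $(-\gamma)'(t) = -\gamma'(1-t)$, so at parameter $t$ the basepoint is $\gamma(1-t)$ and the tangent vector is $-\gamma'(1-t)$. Hence
\begin{equation*}
len_N(-\gamma) = \int_0^1 \|(\gamma(1-t), -\gamma'(1-t))\|^N \, dt .
\end{equation*}

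Next I would perform the substitution $s = 1-t$, which reverses the orientation of $[0,1]$ and absorbs the sign coming from $ds = -dt$, yielding
\begin{equation*}
len_N(-\gamma) = \int_0^1 \|(\gamma(s), -\gamma'(s))\|^N \, ds .
\end{equation*}
The point of this rewriting is that now the two integrands $\|(\gamma(s), \gamma'(s))\|^N$ and $\|(\gamma(s), -\gamma'(s))\|^N$ are evaluated at the \emph{same} point $\gamma(s) \in \mathcal{O}$, so they live in the same tangent space and can be compared. Applying Corollary \ref{reverse} with $\tau = -\gamma'(s)$ (so that $-\tau = \gamma'(s)$) gives, for every $s \in [0,1]$, the bound $\|(\gamma(s), -\gamma'(s))\|^N \le A\,\|(\gamma(s), \gamma'(s))\|^N$.

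Finally, integrating this pointwise estimate over $[0,1]$ and using monotonicity of the integral gives
\begin{equation*}
len_N(-\gamma) = \int_0^1 \|(\gamma(s), -\gamma'(s))\|^N \, ds \le A \int_0^1 \|(\gamma(s), \gamma'(s))\|^N \, ds = A\, len_N(\gamma),
\end{equation*}
which is exactly the claim. The only point requiring a little care is the piecewise-linear/differentiable structure of $\gamma$: the substitution should be carried out on each of the finitely many subintervals on which $\gamma$ is linear (and $\Psi$ smooth) and the contributions summed. Since Corollary \ref{reverse} holds with a single constant $A = 3(K+1)$ independent of the simplex and of the point of $\mathcal{O}$, the fibrewise bound applies uniformly across all pieces, so no difficulty arises in reassembling the estimates. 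In truth there is no substantive obstacle here: all the content sits in Corollary \ref{reverse}, and this proposition is the routine bookkeeping that promotes that fibrewise inequality to a statement about $N$-lengths of paths.
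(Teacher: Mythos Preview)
Your proof is correct and follows essentially the same approach as the paper: compute the derivative of the reverse path, change variables $s=1-t$ to place both integrands at the same basepoint, apply Corollary \ref{reverse} pointwise, and integrate. Your write-up is in fact slightly more careful than the paper's in making the basepoint explicit and in noting that the piecewise structure causes no trouble.
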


\begin{proof}
Since $\gamma$ is piecewise $C^{1}$, for all but finitely many points $[-\gamma '] (s) = - \gamma'(1-s)$. Thus using the simplification of the notation as in the previous proof and changing the variable ($s = 1-t$), we get :
\begin{equation*}
len_{N}(-\gamma) = \int_{0}^{1}  \| [- \gamma '] (s)  \| ^{N} ds = \int_{0}^{1}  \| - \gamma ' (t)  \| ^{N} dt
\end{equation*}
But now we apply the Corollary \ref{reverse} and we have that:
\begin{equation*}
\int_{0}^{1}  \| - \gamma ' (t)  \| ^{N} dt \leq \int_{0}^{1} A  \| \gamma ' (t)  \| ^{N} dt = A len_{N} (\gamma)
\end{equation*}
\end{proof}

\section{Applications}
Now we are in position to prove the Main theorem and different applications. Let $A$ be the constant from Corollary \ref{reverse}.

\begin{cor}
For any $T \in \mathcal{O}$, for any $\phi \in Out(G, \mathcal{O})$ and any piecewise linear path $\gamma$ from $T$ to $\phi(T)$,
\begin{equation*}
len_{L}(\gamma) = len_N (\gamma)
\end{equation*}
Therefore
\begin{equation*}
len_L (\gamma) \leq A len_L (- \gamma)
\end{equation*}
\end{cor}
\begin{proof}
By Proposition \ref{len1}, we get that: \begin{equation*}
len_{N} (\gamma)  =  len_{L} (\gamma) + \Psi(\phi(T)) - \Psi(T).
\end{equation*}
But since as we have seen $\Psi$ is $Out(G, \mathcal{O})$ -invariant, which means $\Psi(\phi(T)) = \Psi(T)$, and therefore $len_{N} (\gamma) = len_{L} (\gamma)$. So using the Corollary \ref{reverse}, the result follows.
\end{proof}

\begin{theorem}
For any $T,S \in \mathcal{O}$ and for any piecewise linear path $\gamma$ from $T$ to $S$, fo \begin{equation*}
len_{N} (\gamma) \leq A len_{N} (- \gamma) +(A+1)[\Psi(T) - \Psi(S)] 
\end{equation*}
where $A$ is the constant of \ref{reverse}.
\end{theorem}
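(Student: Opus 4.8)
The plan is to obtain the inequality by feeding the quasi-symmetry of the corrected norm into the dictionary between $len_{N}$ and $len_{L}$ supplied by Proposition~\ref{len1}. In effect this is the $len_{N}$-analogue of the Corollary immediately preceding the statement, carried out without the simplifying hypothesis $\Psi(\phi(T))=\Psi(T)$ that made $len_{N}$ and $len_{L}$ coincide there; the price of dropping that hypothesis is precisely the correction term $(A+1)[\Psi(T)-\Psi(S)]$.

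First I would record the two instances of Proposition~\ref{len1}: applied to $\gamma$ (a path from $T$ to $S$) it gives $len_{N}(\gamma)=len_{L}(\gamma)+\Psi(S)-\Psi(T)$, and applied to the reversed path $-\gamma$ (from $S$ to $T$) it gives $len_{N}(-\gamma)=len_{L}(-\gamma)+\Psi(T)-\Psi(S)$. These let me pass between the two norms at the cost of a $\Psi$-shift. Next I would produce the quasi-symmetry estimate for the corrected length: integrating the pointwise bound $\|\tau\|^{N}\leq A\|-\tau\|^{N}$ of Corollary~\ref{reverse} with $\tau=\gamma'(t)$ — equivalently, applying Proposition~\ref{len2} to the path $-\gamma$, whose reverse is $\gamma$ — yields $len_{N}(\gamma)\leq A\,len_{N}(-\gamma)$, and in $len_{L}$-form this reads $len_{L}(\gamma)\leq A\,len_{L}(-\gamma)+(A+1)[\Psi(T)-\Psi(S)]$. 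Finally I would combine the two ingredients: rewriting $len_{N}(\gamma)$ through the first identity, bounding $len_{L}(\gamma)$ by the quasi-symmetry just displayed, and re-expressing $len_{L}(-\gamma)$ via the second identity, so that the accumulated potential terms resolve into the single correction $(A+1)[\Psi(T)-\Psi(S)]$ attached to $A\,len_{N}(-\gamma)$.

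The main obstacle is exactly this bookkeeping of the potential function, and in particular its sign. The integrated form of Corollary~\ref{reverse} gives cleanly only $len_{N}(\gamma)\leq A\,len_{N}(-\gamma)$, so all of the substance of the Theorem lies in correctly accounting for the several occurrences of $\Psi(T)$ and $\Psi(S)$ introduced by the two applications of Proposition~\ref{len1} together with the $(A+1)$-weighted correction carried by the $len_{L}$ quasi-symmetry, and in verifying that they aggregate to \emph{exactly} $(A+1)[\Psi(T)-\Psi(S)]$. Because $\Psi(T)-\Psi(S)$ may be of either sign, the correction cannot be treated as harmless slack to be added at the end; the delicate point is to confirm that the direction of the inequality is preserved for all $T,S\in\mathcal{O}$, mirroring — now without the invariance shortcut — the Corollary that precedes the statement.
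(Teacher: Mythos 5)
Your second step is already, verbatim, the paper's proof: the chain
\begin{equation*}
len_{L}(\gamma) + \Psi(S) - \Psi(T) \;=\; len_{N}(\gamma) \;\leq\; A\, len_{N}(-\gamma) \;=\; A\, len_{L}(-\gamma) + A\,[\Psi(T) - \Psi(S)],
\end{equation*}
i.e.\ Proposition \ref{len1} applied to $\gamma$ and to $-\gamma$, combined with Proposition \ref{len2} (the integrated form of Corollary \ref{reverse}), rearranging to $len_{L}(\gamma) \leq A\, len_{L}(-\gamma) + (A+1)[\Psi(T)-\Psi(S)]$. This $len_{L}$ inequality is what the paper actually establishes and intends: its proof closes with ``we get the requested result for $len_{L}(\gamma)$'', and the proof of Theorem \ref{Main theorem} needs the statement in $len_{L}$ form (taking $-\gamma$ geodesic so that $len_{L}(-\gamma) = d(S,T)$, while $len_{L}(\gamma) \geq d(T,S)$). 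So the ``$len_{N}$'' in the statement is a misprint for ``$len_{L}$''.

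Your third step, by contrast, cannot deliver the statement read literally, and the ``delicate point'' you flag has no resolution along the lines you sketch. Carry out the substitutions you describe: $len_{N}(\gamma) = len_{L}(\gamma) + \Psi(S) - \Psi(T) \leq A\, len_{L}(-\gamma) + A[\Psi(T)-\Psi(S)] = A\, len_{N}(-\gamma)$ --- the potential terms telescope away entirely, leaving only $len_{N}(\gamma) \leq A\, len_{N}(-\gamma)$ (which is just Proposition \ref{len2} again), with no residual $(A+1)[\Psi(T)-\Psi(S)]$ ``attached''. Conversely, converting both sides of the literal $len_{N}$ statement by Proposition \ref{len1} shows it is equivalent to $len_{L}(\gamma) \leq A\, len_{L}(-\gamma) + 2(A+1)[\Psi(T)-\Psi(S)]$, which when $\Psi(T) < \Psi(S)$ is strictly stronger than what the ingredients yield and does not follow from them. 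In short: your bookkeeping is correct exactly up to the point where it reproduces the paper's argument, and the intended conclusion is the $len_{L}$ inequality you already obtained; the attempt to force the extra correction term onto the $len_{N}$ form is chasing a typo.
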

\begin{proof}
Combining the Propositions \ref{len1} and \ref{len2}: 
\begin{equation*}
  len_{L} (\gamma) + \Psi(S) - \Psi(T) = len_{N} (\gamma) 
\end{equation*}
\begin{equation*}
  \leq A len_N (- \gamma) = A len_L (- \gamma) + A[\Psi(T) - \Psi(S)].
\end{equation*}
Therefore we get the requested result for $len_{L} (\gamma)$.
\end{proof}

\begin{thm}\label{Main theorem}
For any $T, S \in \mathcal{O}$,
\begin{equation*}
d(T,S) \leq A d(S,T) + (A+1)[\Psi(T) - \Psi(S)]
\end{equation*}
\end{thm}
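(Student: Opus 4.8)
The plan is to deduce the statement about the metric $d$ from the preceding theorem about path lengths, using two facts: that the Lipschitz length of any path dominates the (directed) distance between its endpoints, and that $d$ is realised by a piecewise linear geodesic. In effect the Main Theorem is just the "metric shadow" of the length estimate, obtained by specialising the generic path $\gamma$ to a geodesic.

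First I would record the elementary lower bound $len_L(\gamma) \geq d(X,Y)$ for every piecewise linear path $\gamma$ from $X$ to $Y$. This is immediate from the definition of $len_L$ as a supremum over partitions together with the triangle inequality (item (iii) of the Francaviglia--Martino properties): for every partition $0 = t_0 < \cdots < t_p = 1$ one has $\sum_{i=1}^{p} d(\gamma(t_{i-1}), \gamma(t_i)) \geq d(\gamma(0), \gamma(1)) = d(X,Y)$, and taking the supremum over partitions gives the claim.

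The crucial point is the choice of \emph{direction}. Since $d$ is a geodesic metric and its geodesics may be taken piecewise linear (item (iv) of the same proposition), I would choose a piecewise linear path $\delta$ \emph{from $S$ to $T$} realising the distance, so that $len_L(\delta) = d(S,T)$, and then set $\gamma = -\delta$, a path from $T$ to $S$. Thus $-\gamma = \delta$ and $len_L(-\gamma) = d(S,T)$. Applying the previous theorem to $\gamma$ (in the $len_L$-form established in its proof, namely $len_L(\gamma) \leq A\, len_L(-\gamma) + (A+1)[\Psi(T)-\Psi(S)]$ for any path from $T$ to $S$) yields
\begin{equation*}
len_L(\gamma) \leq A\, len_L(-\gamma) + (A+1)[\Psi(T)-\Psi(S)] = A\, d(S,T) + (A+1)[\Psi(T)-\Psi(S)].
\end{equation*}
Combining this with the lower bound $d(T,S) \leq len_L(\gamma)$ from the first step gives exactly $d(T,S) \leq A\, d(S,T) + (A+1)[\Psi(T)-\Psi(S)]$, as required.

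The only genuine subtlety---and the step I expect to demand the most care---is orienting the geodesic correctly. The path inequality bounds $len_L(\gamma)$ from \emph{above} by a multiple of $len_L(-\gamma)$, so I must take the \emph{reverse} path $-\gamma$, rather than $\gamma$ itself, to be the geodesic realising $d(S,T)$, and I use only the cheap inequality $len_L(\gamma) \geq d(T,S)$ in the forward direction. Had I chosen $\gamma$ to be the geodesic realising $d(T,S)$ instead, the inequality $len_L(-\gamma) \geq d(S,T)$ would point the wrong way and the argument would collapse. Once the orientation is fixed, everything else is a direct substitution.
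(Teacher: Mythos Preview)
Your proof is correct and matches the paper's approach exactly: choose a piecewise linear geodesic from $S$ to $T$, call it $-\gamma$, and apply the preceding length inequality (in its $len_L$ form) to the reversed path $\gamma$. Your write-up is in fact more explicit than the paper's, spelling out both the elementary bound $d(T,S) \leq len_L(\gamma)$ and the orientation subtlety that the paper leaves implicit.
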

\begin{proof}
Let $T,S \in \mathcal{O}$ and let's choose a piecewise linear geodesic path from $S$ to $T$ which we denote by $ - \gamma$.
We apply the previous theorem to $\gamma$, which is a path from $T$ to $S$.
\end{proof}
\begin{remark}
Note here that since $\phi$ is $Out(G,\mathcal{O})$-invariant, if $T,S \in \mathcal{O}$ are in the same orbit, then $d(T,S) \leq A d(S,T)$
\end{remark}
Now we prove a theorem about the relation between the expansion factors of an IWIP relative to $\mathcal{O}$ and its inverse. This is a generalisation of the theorem of Handel and Mosher in \cite{Handel + Mosher}, about the relation of the expansions factors of an IWIP automorphism of a free group and its inverse.
\begin{theorem}
For any IWIP automorphism $\phi \in Out(G, \mathcal{O})$ relative to $\mathcal{O}$, let $\lambda$ be the expansion factor of $\phi$ and $\mu$ be the expansion factor of the IWIP $\phi ^{-1}$. Then $\mu \leq \lambda ^{A}$, where $A$ as above.
\end{theorem}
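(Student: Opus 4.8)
The plan is to reduce the inequality to a statement about the displacement of $\phi$ on $\mathcal{O}$ and then invoke the Remark following Theorem \ref{Main theorem}. The crucial bridge is the train track theory of Francaviglia--Martino recalled in the preliminaries (\cite{FM}): for an IWIP $\phi$ relative to $\mathcal{O}$ there is an optimal train track representative $f : T \rightarrow \phi(T)$ that stretches every edge by the expansion factor $\lambda$, so that $Lip(f) = \lambda$ and hence $d(T, \phi(T)) = \log \lambda$; moreover this realises the minimal displacement, $\log \lambda = \min_{U \in \mathcal{O}} d(U, \phi(U))$. The same description applies to $\phi^{-1}$, giving $\log \mu = \min_{U \in \mathcal{O}} d(U, \phi^{-1}(U))$.

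First I would fix a train track point $T$ for $\phi$, so that $d(T, \phi(T)) = \log \lambda$. Since $\phi(T)$ and $T$ lie in the same $Out(G, \mathcal{O})$-orbit, the $\Psi$-terms in Theorem \ref{Main theorem} cancel and the Remark following it applies to the pair $\big(\phi(T), T\big)$, yielding the purely multiplicative bound
\begin{equation*}
d(\phi(T), T) \leq A \cdot d(T, \phi(T)) = A \log \lambda .
\end{equation*}

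Next I would use that $Out(G, \mathcal{O})$ acts on $\mathcal{O}$ by isometries (the last item of the Francaviglia--Martino properties). Applying $\phi^{-1}$ to both endpoints gives
\begin{equation*}
d(\phi(T), T) = d\big(\phi^{-1}\phi(T), \phi^{-1}(T)\big) = d(T, \phi^{-1}(T)) .
\end{equation*}
Combining this with the displacement characterisation of $\mu$, namely $\log \mu = \min_{U} d(U, \phi^{-1}(U)) \leq d(T, \phi^{-1}(T))$, I obtain $\log \mu \leq A \log \lambda$, which is exactly $\mu \leq \lambda^{A}$.

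The step I expect to be the main obstacle is the identification $\log \lambda = \min_{U} d(U, \phi(U))$ together with the fact that this minimum is attained at a genuine train track point (so that the stretch factor of the optimal map is literally $\lambda$ and not merely bounded by it). This is precisely the existence of optimal train track representatives for relative IWIPs quoted from \cite{FM}, so in the present argument it is used as a black box; once it is granted, the remaining steps—applying the orbit case of the main theorem and the isometric action—are purely formal.
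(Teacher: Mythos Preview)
Your argument is correct and in fact cleaner than the paper's. The paper proceeds differently: it fixes train track points $T$ for $\phi$ and $S$ for $\phi^{-1}$, then compares $d(\phi^{k}(T),T)$ with $d(\phi^{k}(S),S)=k\log\mu$ via the triangle inequality (picking up an additive error $2D$ with $D\geq\max\{d(T,S),d(S,T)\}$), applies the orbit case of the Main Theorem to get $d(\phi^{k}(T),T)\leq A\,d(T,\phi^{k}(T))=Ak\log\lambda$, and finally lets $k\to\infty$ to kill the additive constant. Your route avoids the second train track point and the limiting argument entirely by using the minimal--displacement characterisation $\log\mu=\min_{U}d(U,\phi^{-1}(U))$ together with the isometric action to bound $\log\mu$ by $d(T,\phi^{-1}(T))=d(\phi(T),T)$ directly. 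The trade-off is that you lean on one extra fact from \cite{FM} (that the expansion factor equals the minimal displacement, not just the displacement at some train track point), whereas the paper's asymptotic argument only needs the existence of train track points for $\phi$ and $\phi^{-1}$ separately; once that fact is granted, your proof is shorter and more transparent.
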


\begin{proof}
Let $f : T \rightarrow T $ be an optimal train track representative of $\phi$ and $h : S \rightarrow S $ be an optimal train track representative of $\phi ^{-1}$, which means that $d(\phi^{k}(T), T) = k \log \lambda$ and $d(\phi^{-k}(S), S) = k \log \mu $, for every natural number $k$ .\\
Let choose a number $D \geq \max \{d(T,S), d(S,T) \}$ and then, by the triangle inequality, we get that for any natural number $k$, $d(\phi^{k}(T), T) \geq d(\phi^{k}(S), S) - d(\phi^{k}(S), \phi^{k}(T))- d(T,S) \geq klog \mu  -2D$. On the other hand, using the Main Theorem, $d(\phi^{k}(T),T  ) \leq A \cdot d(T, \phi^{k}(T)) = A \cdot k log \lambda$. Therefore combining the inequalities we get
\begin{equation*}
A \cdot k \cdot log \lambda \geq k \cdot log \mu -2D
\end{equation*}
As consequence, for every $k$ we have that $log \mu \leq A \cdot log \lambda +\frac{2D}{k}$ and sending $k$ to infinity, we get $\frac{log \mu}{log \lambda} \geq A$ or equivalently $\mu \leq \lambda ^{A}$
\end{proof}
 However, Handel and Mosher proved also a more general theorem for automorphisms of free groups, and more specifically they proved a relation between the sets of expansions factors of any automorphism and its inverse, using the notion of strata of relative train tracks representatives and the powerful machinery of laminations of Bestvina, Feighn and Handel (see for example \cite{BFMH1}). Using the theorem above for the general case, we can get as a corollary a special case of this theorem.\\
If $\phi \in Out(F_n)$ and $f: T \rightarrow T$ is a relative train track representative of $\phi$, denoting by $\lambda$ the expansion factor of the top stratum, then by a remark of \cite{FM} there is a relative outer space $\mathcal{O}$ on which $\phi \in Out(G, \mathcal{O})$ and $\phi$ is irreducible relative to $\mathcal{O}$ (equivalently, a maximal free factor system). Moreover, the same is true for $\phi ^{-1}$ using the same space (equivalently the same free factor system). We distinguish two cases. If $\lambda = 1$, then $\phi$ and $\phi ^{-1}$ fix some point of $\mathcal{O}$. Which means that there is a relative train track representative of $\phi^{-1}$, for which the expansion factor of the top stratum is $1$. If $\lambda >1$, then $\phi, \phi ^{-1}$ are IWIP relative to $\mathcal{O}$, and let's denote by $\mu >1$ the expansion factor of $\phi ^{-1}$ relative to $\mathcal{O}$, which means that there is a relative train track representative $h$ of $\phi^{-1}$ with the expansion factor of the top stratum to be $\mu$. Using the theorem above, we get that $\log \mu$ and $\log \lambda$ are comparable and the constant depends on the group. Note that, in general, we don't have the uniqueness of the maximal free factor system, however using this approach we can get a correspondence between the maximal free factor systems of $\phi$ and $\phi ^{-1}$, and in particular their relative expansion factors.


Let $\epsilon$ be a positive number. Let denote by $\mathcal{O} _{\geq{\epsilon}}$ the thick part of $\mathcal{O}$, i.e. the set of all trees of $\mathcal{O}$, which don't contain hyperbolic elements shorter than $\epsilon$,  then it is co-compact for every $\epsilon$.

\begin{theorem}
For every $\epsilon > 0$ there is a constant $B$ so that for every $T,S \in \mathcal{O}_{\geq \epsilon}$ and any piecewise linear path $\gamma$ from $T$ to $S$:
\begin{equation*}
\frac{1}{A} len(\gamma) - B \leq len(- \gamma) \leq Alen(\gamma) + B
\end{equation*}
Moreover, there is a constant $D$ such that for all $T, S \in \mathcal{O} _{\geq \epsilon}$
\begin{equation*}
d(S,T) \leq D d(T, S)
\end{equation*}
\end{theorem}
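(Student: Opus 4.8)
The plan is to derive both statements from one consequence of co-compactness: the function $\Psi$ is bounded on $\mathcal{O}_{\geq\epsilon}$. Indeed, $\Psi$ is continuous and $Out(G,\mathcal{O})$-invariant, and $\mathcal{O}_{\geq\epsilon}$ is co-compact, so $\Psi$ descends to a continuous function on the compact quotient $\mathcal{O}_{\geq\epsilon}/Out(G,\mathcal{O})$ and hence attains a finite maximum and minimum. Set $\Delta_0 := \sup_{\mathcal{O}_{\geq\epsilon}}\Psi - \inf_{\mathcal{O}_{\geq\epsilon}}\Psi$, so that $|\Psi(T)-\Psi(S)|\leq\Delta_0$ whenever $T,S\in\mathcal{O}_{\geq\epsilon}$.

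For the two-sided length estimate I would start from the path-length inequality $len_{N}(\gamma)\leq A\,len_{N}(-\gamma)+(A+1)[\Psi(T)-\Psi(S)]$ and rewrite both lengths through Proposition \ref{len1}; since $-\gamma$ runs from $S$ to $T$ one has $len_{N}(-\gamma)=len_{L}(-\gamma)+\Psi(T)-\Psi(S)$. Collecting the $\Psi$-terms and writing $\Delta=\Psi(T)-\Psi(S)$ gives $len_{L}(\gamma)\leq A\,len_{L}(-\gamma)+(2A+2)\Delta$. Bounding $\Delta\leq\Delta_0$ and repeating the argument with $\gamma$ and $-\gamma$ interchanged yields $len_{L}(\gamma)\leq A\,len_{L}(-\gamma)+B$ and $len_{L}(-\gamma)\leq A\,len_{L}(\gamma)+B$ with $B:=(2A+2)\Delta_0$. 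Because $A=3(K+1)>1$, the first of these rearranges to $\tfrac1A len_{L}(\gamma)-B\leq len_{L}(-\gamma)$, and the second is already the required upper bound, so the displayed two-sided inequality holds.

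For the multiplicative distance bound I would choose $\gamma$ to be a piecewise-linear geodesic from $T$ to $S$, so that $len_{L}(\gamma)=d(T,S)$ while $d(S,T)\leq len_{L}(-\gamma)$; the upper length bound just proved then gives the additive estimate $d(S,T)\leq A\,d(T,S)+B$, valid for all $T,S\in\mathcal{O}_{\geq\epsilon}$. It remains to upgrade this to a purely multiplicative inequality, which I would do by a case split at a threshold $\delta>0$. When $d(T,S)\geq\delta$ the additive estimate already gives $d(S,T)\leq(A+B/\delta)\,d(T,S)$. When $d(T,S)<\delta$ I would instead use a pointwise quasi-symmetry of the Finsler norm on the thick part: restricting to $\mathcal{O}_{\geq\epsilon}$, the relevant sphere bundle of integrable tangent vectors is compact modulo $Out(G,\mathcal{O})$ (finitely many simplex types, closed convex integrable cones), and both $\|\tau\|^{L}$ and $\|-\tau\|^{L}$ are continuous and strictly positive for $\tau\neq0$, so there is a constant $D_0=D_0(\epsilon)$ with $\|-\tau\|^{L}\leq D_0\|\tau\|^{L}$ at every $\epsilon$-thick point. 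Combined with the local approximation $\lim_{t\to0^+}d(\ell,\ell+t\tau)/t=\|\tau\|^{L}$ of Proposition \ref{Basic Prop}, made uniform over the compact thick part, this controls $d(S,T)/d(T,S)$ for nearby $T,S$ and yields $d(S,T)\leq D_0\,d(T,S)$ in the small-distance regime. Taking $D:=\max\{A+B/\delta,\,D_0\}$ completes the proof.

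The hard part is the small-distance regime, and within it two points require care. First, the global asymmetry of $\|\cdot\|^{L}$ (witnessed by the Examples, where the ratio blows up as a loop length tends to $0$) shows that the pointwise bound $\|-\tau\|^{L}\leq D_0\|\tau\|^{L}$ genuinely fails off the thick part; it can only be extracted from the compactness of $\mathcal{O}_{\geq\epsilon}/Out(G,\mathcal{O})$, so the constant $D_0$ must be allowed to depend on $\epsilon$. Second, a geodesic joining two $\epsilon$-thick points need not remain $\epsilon$-thick, so rather than integrating the pointwise bound along such a geodesic I would argue by compactness directly: if no uniform $D$ existed there would be $T_k,S_k\in\mathcal{O}_{\geq\epsilon}$ with $d(S_k,T_k)/d(T_k,S_k)\to\infty$; the additive estimate forces $d(T_k,S_k)\to0$, and after translating the $T_k$ into a compact fundamental domain one may pass to $T_k\to T_\infty$ with $S_k\to T_\infty$, whereupon the uniform local comparison with the norm at $T_\infty$ bounds the ratio, a contradiction. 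Establishing this uniform local comparison, namely that $d(T,S)$ and $d(S,T)$ are comparable to $\|\tau\|^{L}$ and $\|-\tau\|^{L}$ with error controlled uniformly over the compact thick part, is the main technical obstacle.
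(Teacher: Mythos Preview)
Your proof is correct and follows the approach the paper indicates: the paper gives no argument beyond ``the proof is exactly the same as in the free case using the fact that the $\epsilon$-thick part of $\mathcal{O}$ is co-compact,'' and your first paragraph (boundedness of $\Psi$ on $\mathcal{O}_{\geq\epsilon}$ via continuity, invariance, and co-compactness) is precisely that input. The derivation of the two-sided path-length inequality from this is routine and matches the intended argument; the constant $(2A+2)$ versus $(A+1)$ is an artefact of using the paper's (mis-typed) theorem for $len_N$ rather than going straight through Proposition~\ref{len2}, but this is harmless.

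For the multiplicative distance bound you supply considerably more detail than the paper, and your identification of the small-distance regime as the real issue is apt. Your contradiction argument is sound, but you make the final ``uniform local comparison'' look harder than it is. The point is that within a single closed simplex one has the \emph{exact} identity
\[
d(T,S)=\log\bigl(1+\|S-T\|^{L}_{T}\bigr),\qquad d(S,T)=\log\bigl(1+\|T-S\|^{L}_{S}\bigr),
\]
since $\ell_S(g)/\ell_T(g)=1+\tau(g)/\ell_T(g)$ for every hyperbolic $g$; this is not merely the infinitesimal statement of Proposition~\ref{Basic Prop}. Once your sequences $T_k,S_k$ converge to $T_\infty$ you may pass to a subsequence lying in a fixed simplex of the star of $T_\infty$, write $\tau_k=S_k-T_k=t_k\hat\tau_k$ with $\hat\tau_k\to\hat\tau\neq 0$, and read off
\[
\frac{d(S_k,T_k)}{d(T_k,S_k)}=\frac{\log(1+t_k\|{-}\hat\tau_k\|^{L}_{S_k})}{\log(1+t_k\|\hat\tau_k\|^{L}_{T_k})}\longrightarrow \frac{\|{-}\hat\tau\|^{L}_{T_\infty}}{\|\hat\tau\|^{L}_{T_\infty}}<\infty,
\]
by continuity of $\|\cdot\|^{L}$. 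This dispatches the ``main technical obstacle'' in one line, and your worry about geodesics leaving the thick part never arises. (Your verification that $S_k\to T_\infty$ also needs a small extra step: from $d(T_\infty,S_k)\to 0$ one only gets $d(S_k,T_\infty)$ bounded via the additive estimate, but then a further subsequence of the $S_k$ converges in the co-compact thick part to some $S_\infty$ with $d(T_\infty,S_\infty)=0$, forcing $S_\infty=T_\infty$.)
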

The proof is exactly the same in the free case using the fact the $\epsilon$-thick part of $\mathcal{O}$ is co-compact.

\end{document}